\documentclass[10pt]{article}
\usepackage{amsthm,amsfonts,amsmath,amscd,amssymb,epsfig}
\usepackage[arrow, matrix, curve]{xy}
\parindent=0pt
\parskip=4pt
\setlength{\unitlength}{1mm}

\theoremstyle{plain}
\newtheorem{theorem}{Theorem}[section]

\newtheorem{corollary}[theorem]{Corollary}

\newtheorem{lemma}[theorem]{Lemma}

\theoremstyle{remark}

\newcommand{\N}{{\mathbb{N}}}
\newcommand{\R}{{\mathbb{R}}}
\newcommand{\C}{{\mathbb{C}}}

\renewcommand{\H}{{\mathbb{H}}}

\title{Gromov Convergence in H\"older Spaces and Minimal Connections on Jet Bundles\\ }

\begin{document}

\vspace{2ex}
\begin{center}
\bfseries\Large{Gromov Compactness in H\"older Spaces and Minimal Connections on Jet Bundles}
\end{center}
\begin{center}
\large{Viktor Fromm}
\end{center}
\vspace{2ex}
The goal of this work is to establish a proof of the Gromov compactness in H\"older spaces
for curves with a totally real boundary condition following the original
geometric idea in \cite{Gr}. We use a local reflection principle in neighbourhoods of the totally
real submanifold as developed in \cite{Iv2} and existence results for special connections on spaces of jet bundles to obtain higher regularity
and Gromov-Schwarz estimates along the boundary. \\ \\
This work is based on my diploma thesis \cite{Fro} written at the Humboldt University Berlin. I would like to express my heartfelt gratitude to my advisor Klaus Mohnke for his help and encouragement. I am also deeply indepted to Wilhelm Klingenberg for his advice and for many fruitful discussions.
\section{Introduction and Statement of the Main Result}
We wish to give a proof of the following compactness theorem
\begin{theorem}
Let $X^{n}$ and $M^{2n}$ be compact manifolds of class $\mathcal{C}^{k+1,\alpha}$, $\{J_{n}\}_{n \in \N}$ a sequence of $\mathcal{C}^{k,\alpha}$-differentiable almost complex structures on $M$ that converges uniformly in $\mathcal{C}^{k,\alpha}(M)$ to an almost complex strcuture $J_{\infty}$ and $$\{\iota_{n}\}_{n \in \N} \subset \mathcal{C}^{k+1,\alpha}(X,M)$$ a sequence of embeddings converging uniformly in $\mathcal{C}^{k+1,\alpha}(X,M)$ to a $J_{\infty}$-totally real embedding $\iota_{\infty}$. Let $\Sigma$ be a compact smooth surface with boundary, $\{j_{n}\}_{n \in \N}$ complex strucures on $\Sigma$ and $$\{u_{n}\}_{n \in \N} \subset \mathcal{C}^{0} \cap L^{1,2}_{loc}(\Sigma,M)$$ a sequence of $j_{n}$-$J_{n}$-pseudoholomorphic curves with $u_{n}(\partial \Sigma) \subset \iota_{n}(X)$. \\ \\
If the energies $$\{\mathcal{E}(u_{n})=\frac{1}{2} \|du_{n}\|^{2}_{L^{2}(\Sigma \setminus \partial \Sigma, M)}\}_{n \in \N}$$ are uniformly bounded above by some constant $C_{\mathcal{E}}$ then there exist a regular collection $\Delta$ of curves on $\Sigma$, diffeomorphisms $\{\varphi_{n}\}_{n \in \N} \subset \mathcal{C}^{\infty}(\Sigma,\Sigma)$, a complex structure $j_{\infty}$ on $\Sigma \setminus \Delta$ such that components of $\Delta$ correspond to punctures on $\Sigma \setminus \Delta$ and a mapping $$u_{\infty} \in \mathcal{C}^{0}(\Sigma,M) \cap L^{1,2}_{loc}(\Sigma \setminus 
 \Delta,M)$$ with $u_{\infty}(\partial \Sigma) \subset \iota_{\infty}(X)$ such that the following holds
\begin{enumerate}
\item (Convergence of Complex Structures) $(\varphi^{*}_{n})j_{n} \rightarrow j_{\infty}$ uniformly in $\mathcal{C}^{\infty}(\Sigma \setminus \Delta)$.
\item (Properties of the Limit) The restriction of $u_{\infty}$ to any component of $\Delta$ is constant
and the restriction to $\Sigma \setminus \Delta$ is $j_{\infty}$-$J_{\infty}$-pseudoholomorphic.
\item (Convergence of Energies) $\mathcal{E}(u_{n}) \rightarrow \mathcal{E}(u_{\infty})$
\item (Regularity and Convergence of Curves) We have $u_{\infty} \in \mathcal{C}^{k+1,\alpha}(\Sigma \setminus \Delta,M)$ and $\mathcal{C}^{k+1,\alpha'}$-convergence $u_{n} \circ \varphi_{n} \rightarrow u_{\infty}$ holds uniformly on compact subsets of $\Sigma \setminus \Delta$ for each $\alpha' \in (0,\alpha)$, furthermore $u_{n} \circ \varphi_{n} \rightarrow u_{\infty}$ uniformly in $\mathcal{C}^{0}(\Sigma)$.  
\end{enumerate}

\end{theorem}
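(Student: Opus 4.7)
The plan is to follow Gromov's original geometric strategy, adapted to the totally real boundary setting: reduce everything to uniform a priori $\mathcal{C}^{1}$-estimates via an $\varepsilon$-regularity principle, identify the finite set of energy concentration points, extract bubbles there by rescaling, and then promote the $\mathcal{C}^{0}$-convergence on the complement to full Hölder convergence. The novelty lies in carrying out the last step up to the boundary in Hölder class, for which I would invoke the local reflection principle of \cite{Iv2} together with the jet bundle connections announced in the introduction.

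First I would establish $\varepsilon$-regularity. For interior balls the standard mean value / monotonicity inequality for $j_{n}$-$J_{n}$-holomorphic maps gives a universal threshold $\varepsilon_{0}>0$ such that $\mathcal{E}(u_{n};B_{r}) < \varepsilon_{0}$ implies a uniform $\mathcal{C}^{1}$-bound on $B_{r/2}$. For half-balls meeting $\partial\Sigma$ I would apply the reflection of \cite{Iv2} in a tubular neighbourhood of $\iota_{n}(X)$, which extends $u_{n}$ to a pseudoholomorphic map for an almost complex structure defined on an open set of $M$ doubled across $\iota_{n}(X)$; the interior estimate then yields the boundary $\varepsilon$-regularity. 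The totally real hypothesis and the $\mathcal{C}^{k+1,\alpha}$-convergence of $\iota_{n}$ guarantee the reflection is well defined for large $n$ with uniform constants.

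Next I would perform the bubbling analysis. Using a thick/thin decomposition of $(\Sigma,j_{n})$ and absorbing the rescalings and the degeneration of the conformal structure into the diffeomorphisms $\varphi_{n}$, I would show that the set $\Delta$ of points where energy concentrates above $\varepsilon_{0}$ is finite, that outside $\Delta$ the $\varepsilon$-regularity gives equicontinuity and hence a $\mathcal{C}^{0}$-limit $u_{\infty}$ by Arzelà--Ascoli, and that at each point of $\Delta$ a Hofer-type rescaling (interior disks for points of $\Sigma\setminus\partial\Sigma$, half-disks composed with reflection for boundary points) produces a non-constant bubble. Iterating gives the tree structure; the standard no-neck-energy-loss argument, using the reflected equation along the boundary, then yields the energy identity $\mathcal{E}(u_{n}) \to \mathcal{E}(u_{\infty})$, together with items (1)--(3).

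The main obstacle is item (4), upgrading $\mathcal{C}^{0}$-convergence to $\mathcal{C}^{k+1,\alpha'}$ convergence \emph{including along $\partial\Sigma$}, with sharp regularity in the full $\mathcal{C}^{k+1,\alpha}$ setting of the data. Naive Schauder bootstrapping applied to the reflected equation loses a derivative, since the reflected almost complex structure is typically only as regular as the doubling across $\iota_{n}(X)$ permits. This is exactly where the minimal connections on the jet bundles enter: one rewrites the Cauchy--Riemann equation in a frame adapted to a connection of minimal torsion on the appropriate $k$-jet bundle, for which the nonlinearity and the boundary contributions are absorbed optimally, so that Schauder estimates in $\mathcal{C}^{k,\alpha}$ for $du$ --- and hence $\mathcal{C}^{k+1,\alpha}$ bounds for $u$ --- hold uniformly in $n$ up to $\partial\Sigma$. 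A standard interpolation and diagonal argument then yields $\mathcal{C}^{k+1,\alpha'}$-convergence for every $\alpha' \in (0,\alpha)$ on compact subsets of $\Sigma\setminus\Delta$, completing the theorem.
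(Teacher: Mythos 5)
Your high-level plan shares some elements with the paper (reflection across the totally real submanifold, jet bundles with minimal connections to close the boundary bootstrap, Arzel\`a--Ascoli and an energy identity), but it departs from the paper's actual route in two places where the details matter, and the second departure is a genuine gap.

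\textbf{Different route: hyperbolic-surface convergence vs.\ bubble trees.} The paper does not extract bubbles by Hofer rescaling. After the $\varepsilon$-regularity (Lemma 4.1, proved using the thick--thin decomposition of the hyperbolic \emph{Schottky double} $S^{d}_{n}$), it punctures $\Sigma$ at finitely many points $F_{n}$ so that the gradient of $u_{n}$ is uniformly bounded in the induced hyperbolic metric, and then invokes Mumford-type compactness for hyperbolic surfaces with boundary (Theorem 3.1, following Hummel) to obtain the diffeomorphisms $\varphi_{n}$ and the limit conformal structure at once. This is why $\Delta$ in the theorem is a \emph{regular collection of curves} (collapsing geodesics and arcs with endpoints on $\partial\Sigma$, plus circles around the set punctures), and why $u_{\infty}$ is required to be constant on each \emph{component} of $\Delta$. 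Your proposal conflates $\Delta$ with a finite set of energy-concentration points. That description is inconsistent with the statement you are asked to prove (one does not ``restrict'' a map to a point and get a constancy assertion), and more importantly it does not account for pinching of the conformal structures $j_{n}$, which is an independent degeneration from energy concentration and must be handled by the geodesic/cusp mechanism, not by local rescaling.

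\textbf{The gap in the regularity bootstrap.} You correctly identify the obstruction -- naive doubling of $J$ across $\iota_{n}(X)$ destroys regularity -- but your proposed remedy (``rewrite the CR equation in a frame adapted to a minimal-torsion connection so the nonlinearity is absorbed optimally'') is not a worked-out argument and, as stated, would not close. The paper's point is sharper: one never doubles $J$ on $M$ at all. Instead one writes $\tilde{u}=\Phi_{i}^{-1}\circ u\circ\varphi$ on the half-disk, reflects the trivial bundle $\overline{D}_{+}\times\R^{2n}$ to $\overline{D}\times\R^{2n}$, and observes that $(J_{i}\circ\tilde{u})^{d}$ is a complex structure on that \emph{bundle} whose regularity is only that of $\tilde{u}$ itself. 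This gives $\mathcal{C}^{1,\alpha}$ up to the boundary but cannot go higher directly, because the doubled data is capped at $\mathcal{C}^{1,\alpha}$. The iteration proceeds by passing to $J^{1}(E)$, where one needs the connection on the total space $E$ to be not only minimal but also \emph{horizontal} with respect to $\nabla$, so that the 1-jet of a holomorphic section projects to a holomorphic section of a new bundle $E'\to N$ to which the same boundary estimate applies; Lemma 2.4 is precisely the existence result needed for this, together with quantitative $\mathcal{C}^{k-1,\alpha}$ bounds on the connection difference so the constants in the bootstrap stay uniform. Your proposal omits both the horizontality condition (without which the jet section does not live over $N$ and the recursion cannot be set up) and the need for uniform estimates on the connection so that the Gromov--Schwarz constant remains controlled by $J$, $X$ and $\mu$.
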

Here $k$ is any non-negative integer and $\alpha$ lies in the interval $(0,1)$. Thus we obtain Gromov convergence in the spaces $\mathcal{C}^{k+1,\alpha'}$ without any additional assumptions on the almost complex structure. \\ \\ Let us notice that an idea of proof of the theorem in the case of almost complex structures that are integrable in some neighbourhood of the totally real submanifold is mentioned in \cite{Gr}, paragraph 2.1.D and further explained in \cite{Pa}, paragraph 3.4 whereas a proof of the result in the context of Sobolev spaces $L^{1,p}$, $p>2$ is given in \cite{Iv}. 
\section{Regularity and Gromov-Schwarz estimates}
In this section we develop the analytic setup for the construction of the limit map $u_{\infty}$ in theorem 1.1. Let us first recall some properties of the Cauchy-Riemann operator on $\C$:
\begin{theorem}
Let $\overline{D} \subset \C$ denote the closed unit disk, $J_{st}$ the standard complex structure on $\R^{2n}$, given by $(v,w) \mapsto (-w,v)$ and $\overline{\partial}: L^{1,2}(\overline{D},\R^{2n}) \rightarrow L^{2}(\overline{D},\R^{2n})$ resp. $\partial: L^{1,2}(\overline{D},\R^{2n}) \rightarrow L^{2}(\overline{D},\R^{2n})$ be given by $$f \mapsto \frac{1}{2}(\frac{\partial f}{ \partial x} + J_{st} \frac{\partial f}{ \partial y})$$ resp. by $$f \mapsto \frac{1}{2}(\frac{\partial f}{ \partial x} - J_{st} \frac{\partial f}{ \partial y})$$ There exist operators $$P: L^{2}(\overline{D},\R^{2n}) \rightarrow L^{1,2}(\overline{D},\R^{2n})$$ and and $$T: L^{2}(\overline{D},\R^{2n}) \rightarrow L^{2}(\overline{D},\R^{2n})$$ such that the following holds:
\begin{enumerate}
\item $(P \circ \overline{\partial})|_{L^{1,2}_{0}(\overline{D},\R^{2n})}=Id_{L^{1,2}_{0}(\overline{D},\R^{2n})}$ 
\item $(T \circ \overline{\partial})|_{L^{1,2}_{0}(\overline{D},\R^{2n})}=\partial|_{L^{1,2}_{0}(\overline{D},\R^{2n})}$.
\item For each $p>2$ the restriction of $T$ to $L^{p}(\overline{D},\R^{2n})$ is a bounded operator $L^{p}(\overline{D},\R^{2n}) \rightarrow L^{p}(\overline{D},\R^{2n})$. 
\item $P$ restricts to a bounded operator $L^{p}(\overline{D},\R^{2n}) \rightarrow \mathcal{C}^{1-2/p}(\overline{D},\R^{2n})$ for every $p>2$. 
\item For every $\alpha \in (0,1)$ the restrictions of $P$ and of $T$ to $\mathcal{C}^{\alpha}(\overline{D},\R^{2n})$ are bounded operators $P: \mathcal{C}^{\alpha}(\overline{D},\R^{2n}) \rightarrow \mathcal{C}^{1,\alpha}(\overline{D},\R^{2n})$ and $T: \mathcal{C}^{\alpha}(\overline{D},\R^{2n}) \rightarrow \mathcal{C}^{\alpha}(\overline{D},\R^{2n})$.
\item $P$ and $T$ restrict to bounded operators $L^{1,p}(\overline{D},\R^{2n}) \rightarrow L^{2,p}(\overline{D},\R^{2n})$ resp. $L^{1,p}(\overline{D},\R^{2n}) \rightarrow L^{1,p}(\overline{D},\R^{2n})$ for every $p>1$.
\end{enumerate}
\end{theorem}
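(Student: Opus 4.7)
The plan is to realise $P$ as the Cauchy transform
$$Pf(z) = -\frac{1}{\pi} \int_{D} \frac{f(\zeta)}{\zeta - z} \, dA(\zeta)$$
and $T$ as the associated Beurling--Ahlfors transform
$$Tf(z) = -\frac{1}{\pi}\,\mathrm{p.v.}\!\int_{D} \frac{f(\zeta)}{(\zeta - z)^{2}} \, dA(\zeta),$$
both acting componentwise under the identification $(\R^{2n},J_{st}) \cong \C^{n}$. Properties (1) and (2) then follow from the Cauchy--Pompeiu formula: for $f \in \mathcal{C}_{c}^{\infty}(D,\R^{2n})$ direct differentiation under the integral, together with the distributional identity $\bar\partial_{z}(\zeta-z)^{-1} = -\pi\,\delta(\zeta-z)$, gives $\bar\partial Pf = f$ and $\partial Pf = Tf$ pointwise; for general $f \in L^{1,2}_{0}(\overline{D},\R^{2n})$ the identities extend by density once the continuity of $P$ and $T$ established below is in hand.

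For (3) I would invoke Calder\'on--Zygmund theory: the convolution kernel $-1/(\pi z^{2})$ is a standard Calder\'on--Zygmund kernel with unimodular Fourier multiplier on $\C \cong \R^{2}$, so $L^{2}$-boundedness is immediate and $L^{p}$-boundedness for $1 < p < \infty$ (hence in particular $p > 2$) follows from the singular-integral machinery. For (4) I would estimate $|Pf(z_{1})-Pf(z_{2})|$ directly, splitting the integral into a disc of radius $2|z_{1}-z_{2}|$ around the midpoint, where H\"older's inequality and the local integrability of $|\zeta-z_{i}|^{-1}$ in $L^{p/(p-1)}$ contribute a term of order $|z_{1}-z_{2}|^{1-2/p}\|f\|_{L^{p}}$, and its complement, where the pointwise bound $|(\zeta-z_{1})^{-1}-(\zeta-z_{2})^{-1}| \le C|z_{1}-z_{2}|/|\zeta-z_{0}|^{2}$ reduces the estimate to a convergent integral.

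For (5) the bound $T:\mathcal{C}^{\alpha}\to\mathcal{C}^{\alpha}$ is the Schauder estimate for the Beurling transform, which I would establish by the standard regularisation trick: write
$$Tf(z) = -\frac{1}{\pi}\int_{D} \frac{f(\zeta)-f(z)}{(\zeta-z)^{2}}\,dA(\zeta) + c_{D}(z)\,f(z),$$
where the regularised integrand is controlled pointwise by $[f]_{\mathcal{C}^{\alpha}}|\zeta-z|^{\alpha-2}$ and hence absolutely integrable, and $c_{D}$ is an explicit function computable by Stokes' theorem whose H\"older behaviour on $\overline{D}$ is analysed directly; a second-difference version of the same argument bounds the $\alpha$-H\"older seminorm of $Tf$. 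The estimate $P:\mathcal{C}^{\alpha}\to\mathcal{C}^{1,\alpha}$ is then immediate, since $\bar\partial Pf = f$ and $\partial Pf = Tf$ both lie in $\mathcal{C}^{\alpha}$ with norms bounded by $\|f\|_{\mathcal{C}^{\alpha}}$. Property (6) is obtained by commuting spatial derivatives with the integral operators: for $f \in L^{1,p}(\overline{D},\R^{2n})$ one verifies distributionally that $\partial_{i}(Pf) = P(\partial_{i}f)$ and $\partial_{i}(Tf) = T(\partial_{i}f)$, so the $L^{2,p}$ and $L^{1,p}$ bounds follow at once by applying (3) and (4) to the derivatives $\partial_{i}f$.

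I expect the main obstacle to be the H\"older estimate in (5): the singular integral defining $T$ must be controlled uniformly up to $\partial D$, and the auxiliary function $c_{D}(z)$ requires explicit boundary analysis, since the double pole in $(\zeta-z)^{-2}$ interacts delicately with $\partial D$ when $z$ approaches the boundary. A clean device is to extend $f$ by zero to $\C$, apply the translation-invariant Beurling transform on the whole plane (whose $\mathcal{C}^{\alpha}$-boundedness is classical), and then restrict to $\overline{D}$, picking up boundary error terms that are smoothly supported away from $\partial D$ or controlled by the diameter of $D$.
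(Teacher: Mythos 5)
The paper does not actually prove this theorem; it simply cites specific theorems in Vekua's book. Your proposal reconstructs the argument from scratch with the correct operators (the area Cauchy transform $P$ and its $\partial_z$-derivative, the Beurling transform $T$, on the disk), which are exactly the operators Vekua treats, so the overall plan is the right one. A small slip in parts (1)--(2): the Cauchy--Pompeiu formula $f(z) = \frac{1}{2\pi i}\int_{\partial D}\frac{f(\zeta)}{\zeta-z}\,d\zeta - \frac{1}{\pi}\int_{D}\frac{\bar\partial f(\zeta)}{\zeta-z}\,dA(\zeta)$ gives $P(\bar\partial f)=f$ once the boundary integral is killed by the condition $f\in L^{1,2}_{0}$; the identities you actually derive, $\bar\partial(Pf)=f$ and $\partial(Pf)=Tf$, are the \emph{other} composition (a right inverse, valid for all $f$ without any boundary condition) and are not what parts (1) and (2) assert. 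The fix is immediate, but as written the argument proves a different statement.

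The genuine gap is in (6). On the bounded domain $D$ the operators $P$ and $T$ are not translation-invariant, and the commutation relations $\partial_{i}(Pf)=P(\partial_{i}f)$ and $\partial_{i}(Tf)=T(\partial_{i}f)$ on which your argument rests are false. Integrating by parts in $P(\partial_{\zeta}f)$ produces the boundary term $-\frac{i}{2\pi}\int_{\partial D}\frac{f(\zeta)}{\zeta-z}\,d\bar\zeta$, a Cauchy-type integral of the trace of $f$ on $\partial D$, so $\partial(Pf)$ and $P(\partial f)$ differ by a nonzero holomorphic function whenever $f$ does not vanish on $\partial D$; similarly for $T$. The device of extending $f$ by zero to $\C$ and applying the whole-plane operators, which you raise for (5), fails here for the same reason: the zero extension of a general $W^{1,p}(\overline{D})$ function is not in $W^{1,p}(\C)$, and the jump across $\partial D$ produces exactly the boundary-layer contribution you would be trying to ignore. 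A correct proof of (6) must retain these Cauchy integrals over $\partial D$ and estimate them in $W^{2,p}$ resp.\ $W^{1,p}$ via the trace of $f$ in $W^{1-1/p,p}(\partial D)$ --- this is precisely what Vekua's Theorem 1.27, the result the paper cites for parts (3) and (6), carries out.
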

\begin{proof}
See \cite{Ve}. Part 1 and part 2 follow from theorem 1.17 on page 29, part 4 is theorem 1.20 on page 32, part 5 is a consequence of theorem 1.33 on page 49 and parts 3 and 6 follow from theorem 1.27 on page 40.
\end{proof}
We recall that in the case of curves without boundary Gromov-Schwarz estimates for higher derivatives can be obtained using the operators $P$ and $T$ and a bootstrap argument:
\begin{theorem}
Let $M$ be a closed manifold with some fixed Riemannian metric, $J$ a $\mathcal{C}^{k,\alpha}$-smooth almost complex structure on $M$, $\mu \in(0,1)$ and $S$ a hyperbolic surface without boundary. There exist positive constants $\delta=\delta(J)$ and $C=C(J,\mu)$ such that for any nonconstant pseudoholomorphic map $u \in \mathcal{C}^{0} \cap L^{1,2}_{loc} (S,M)$, any $z \in S$ and any $r>0$ such that the image of the metric ball $B_{r}(z) \subset S$ satisfies $ \text{diam}(u(B_{r}(z)))<\delta$ the following estimate holds: 
$$\|u \|_{\mathcal{C}^{k+1,\alpha}(B_{\mu r}(z),M)}<C r^{-(k+1+\alpha)} \text{diam} (u(B_{r}(z)))$$
In particular $u$ is differentiable of class $\mathcal{C}^{k+1,\alpha}$. \\ \\
In addition, the constant $C$ can be chosen to depend continuosly on $J$ in the following sense: If convergence $J_{n} \rightarrow J$ holds in $\mathcal{C}^{k,\alpha}(M)$ then we have $$C_{n}=C(J_{n},\mu) \rightarrow C=C(J,\mu)$$
\end{theorem}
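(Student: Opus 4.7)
The plan is to localize to a small Euclidean disk, rewrite the equation as a perturbed $\overline{\partial}$-equation with small nonlinearity, and then bootstrap using the operators $P$ and $T$ of Theorem 2.1.

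First, around $z \in S$ I pick an isothermal chart for the hyperbolic metric identifying $B_{r}(z)$ with (a rescaling of) the Euclidean unit disk, and a $\mathcal{C}^{k+1,\alpha}$-coordinate chart on $M$ centred at $u(z)$ in which $J(u(z)) = J_{st}$; for $\delta$ small this is possible because $u(B_{r}(z))$ has small diameter. Setting $v(\zeta) := u(r\zeta)$ in these coordinates I obtain a map $v \colon \overline{D} \to \R^{2n}$ with $v(0) = 0$ and $\|v\|_{\mathcal{C}^{0}(\overline{D})} \le C\,\mathrm{diam}(u(B_{r}(z)))$; the rescaling accounts for the factor $r^{-(k+1+\alpha)}$ in the final estimate. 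The pseudoholomorphic equation $du + J(u)\,du \circ j_{st} = 0$ becomes
$$\overline{\partial} v \;=\; Q(v)\,\partial v,$$
where $Q$ is $\mathcal{C}^{k,\alpha}$-smooth with $Q(0) = 0$, so that $\|Q(v)\|_{\mathcal{C}^{0}}$ is controlled by, and can be made arbitrarily small with, $\delta$.

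The bootstrap runs in three stages. Fix a smooth cutoff $\chi$ supported in $\overline{D}$ with $\chi \equiv 1$ on $\overline{B_{\mu}}$ and set $w := \chi\,v \in L^{1,2}_{0}(\overline{D},\R^{2n})$, so that
$$\overline{\partial} w \;=\; \chi\,Q(v)\,\partial v \;+\; v\,\overline{\partial}\chi,$$
and items 1, 2 of Theorem 2.1 give $w = P(\overline{\partial} w)$ and $\partial w = T(\overline{\partial} w)$. Stage A (from $L^{1,2}$ to $L^{1,p}$ for some $p>2$): on $B_{\mu}$ the identity $\partial w = T(\overline{\partial} w)$ reads
$$\partial v \;=\; T\bigl(\chi\,Q(v)\,\partial v\bigr) \;+\; T\bigl(v\,\overline{\partial}\chi\bigr);$$
since the $L^{p}$-operator norm of $T$ tends to $1$ as $p \downarrow 2$ and $\|Q(v)\|_{\infty}$ is small, for $p$ slightly above $2$ the first term absorbs into the left, yielding $v \in L^{1,p}_{loc}$. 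Stage B: the Morrey-type embedding in item 4 upgrades $v$ to $\mathcal{C}^{\alpha}$, and then item 5 applied to $w = P(\overline{\partial} w)$ promotes $v$ to $\mathcal{C}^{1,\alpha}$. Stage C: differentiate the equation $\overline{\partial} v = Q(v)\,\partial v$ and iterate item 5, using the Hölder algebra property to control $Q(v)\,\partial v$ at each step, gaining one derivative per application, up to $\mathcal{C}^{k+1,\alpha}$ (the regularity of $J$).

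Continuity of the constant follows because every constant appearing in the bootstrap is a polynomial expression in $\|J\|_{\mathcal{C}^{k,\alpha}(M)}$ and in the fixed operator norms of $P$ and $T$ from Theorem 2.1; hence $J_{n} \to J$ in $\mathcal{C}^{k,\alpha}(M)$ forces $C(J_{n},\mu) \to C(J,\mu)$. The principal obstacle is Stage A, since gaining integrability above $L^{2}$ is not supplied by soft elliptic regularity and requires the quantitative interplay between the smallness of $\|Q(v)\|_{\infty}$ (controlled by $\delta$) and the sharp $L^{p}$-operator norm of $T$ for $p$ close to $2$; once this is in place, the higher-order estimates reduce to a routine Hölder-Schauder iteration.
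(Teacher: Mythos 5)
Your bootstrap is exactly the standard argument the paper delegates to Sikorav (Theorem 2.3.6 in \cite{Si}), and it is also the template the paper itself spells out in Steps~2--5 of the proof of Theorem~2.3 (the boundary version, where the only new ingredient is the reflection). So the approach is the right one; a few details deserve flagging.

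In Stage~A you invoke the fact that $\|T\|_{L^{p}\to L^{p}}\to 1$ as $p\downarrow 2$ to close the absorption, but this is not needed and is not how the paper runs the argument: one simply fixes any $p>2$ once and for all and then shrinks $\delta$ so that $\|Q(v)\|_{\infty}<\bigl(1+\|T\|_{L^{p}\to L^{p}}\bigr)^{-1}$, which makes $1-Q(v)T$ invertible on $L^{p}$. (Also, be careful with the algebra: from $\overline{\partial}w=\chi Q(v)\partial v + v\overline{\partial}\chi$ you want to substitute $\chi\partial v=\partial w-v\partial\chi$ before applying $T$, so that the unknown $\overline{\partial}w$ appears on both sides and the inversion is literal rather than formal.) More substantively, Stage~B as stated has a gap. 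After Stage~A you have $v\in\mathcal{C}^{1-2/p}$, so in the worst case $k=0$ the coefficient $Q(v)$ is only of class $\mathcal{C}^{\alpha(1-2/p)}$, not $\mathcal{C}^{\alpha}$. Item~5 of Theorem~2.1 then yields $v\in\mathcal{C}^{1,\alpha'}$ only for $\alpha'=\alpha(1-2/p)<\alpha$. Once $v\in\mathcal{C}^{1,\alpha'}$ you do get $Q(v)\in\mathcal{C}^{\alpha}$ and a second application of item~5 gives the full $\mathcal{C}^{1,\alpha}$ bound; this two-pass structure is precisely why the paper separates Steps~3 and~4 in the boundary proof, and your write-up should do the same. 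Finally, the continuity claim for the constant is fine in spirit but should be phrased as: every constant in the chain is a continuous (in fact locally bounded, hence after a fixed choice of $p$ and cutoffs, Lipschitz) function of $\|J\|_{\mathcal{C}^{k,\alpha}}$ and of the modulus of $J$-total reality, so $\mathcal{C}^{k,\alpha}$-convergence of $J_{n}$ gives convergence of the constants; ``polynomial'' is a slight overclaim because of the inverse $(1-Q(v)T)^{-1}$, but boundedness and continuity on a neighbourhood of $J$ is all that is used.
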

\begin{proof}
 See \cite{Si}, theorem 2.3.6 on page 171.
\end{proof}
Let us turn to the case of curves with boundary on a totally real submanifold. Here the idea is to conduct a local doubling construction on the complex vector bundle $u^{*}TM$. However, in order to obtain Gromov-Schwarz estimates for higher derivatives we need to overcome the technical difficulty that the double of the almost complex structure $J$ will in general not even be of class $\mathcal{C}^{1}$. We do this by successively applying the doubling contruction to almost complex structures on spaces of jets of pseudoholomorphic maps rather than directly to $u^{*}J$. The choice of these almost complex structures is not unique and we will have to keep track of their regularity.\\ \\
The following theorem is our main technical result. The first steps of the proof are similar to arguments in paragraph 2.4 of \cite{Si}. A somewhat weaker statement using regularity theory of the Laplace operator and leading to estimates for Sobolev norms can be found as Proposition B.4.9 in \cite{McSal2}. 
\begin{theorem}
Under the assumptions of theorem 2.1 let $X^{n} \subset M^{2n}$ be a closed $J$-totally real submanifold, $S$ a hyperbolic surface with boundary and $u$ satisfy the boundary condition $u(\partial S) \subset X$. Then there are constants $\delta_{\partial}=\delta_{\partial}(J,X)$ and $C_{\partial}=C_{\partial}(J,X,\mu)$ such that the estimate of theorem 2.1 holds around any point $z \in \partial S$.
\end{theorem}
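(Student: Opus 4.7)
I would reduce to a local boundary problem. Fix $z \in \partial S$ and use isothermal coordinates to identify a neighbourhood of $z$ in $S$ with the upper half-disk $\overline{D}^{+} \subset \C$, with $\partial S$ corresponding to the real segment. Using the $\mathcal{C}^{k+1,\alpha}$-regularity of $X$ and a coordinate change on $M$ centred at $u(z)$, I would arrange that $X$ is locally $\R^{n} \subset \R^{2n}$ and that $J$ agrees with $J_{st}$ along $\R^{n}$ (with $J - J_{st}$ vanishing to order one on $X$), so that the boundary condition reads $u(\overline{D}^{+} \cap \R) \subset \R^{n}$.

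\textbf{Reflection and the base case.} Following the construction of \cite{Iv2}, extend $u$ to the full disk by reflecting the domain across $\R$ and composing with a local involution of $M$ fixing $X$. The extension $\tilde u \in \mathcal{C}^{0} \cap L^{1,2}_{loc}(\overline{D},M)$ is pseudoholomorphic for a reflected almost complex structure $\tilde J$. Even for smooth $J$, this $\tilde J$ need not be of class $\mathcal{C}^{1}$ across $\R^{n}$, since normal derivatives of $J$ along $X$ change sign under the reflection. Continuity is nevertheless enough to rewrite the pseudoholomorphic equation in integral form, apply the Hölder mapping properties of $P$ and $T$ from Theorem 2.1, and bootstrap in the manner of the interior argument to obtain $\mathcal{C}^{1,\alpha'}$ control of $\tilde u$ with the scaling in $r$ asserted in the theorem. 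This settles the case $k=0$.

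\textbf{Higher regularity via jets.} To continue beyond $\mathcal{C}^{1,\alpha}$, I lift the problem to the bundle of $k$-jets $J^{k}(S,M)$. For a suitable $\mathcal{C}^{k,\alpha}$ connection $\nabla$ on $TM$, one can define an almost complex structure $J^{(k)}$ on $J^{k}(S,M)$ for which the prolongation $j^{k}u$ is a $J^{(k)}$-pseudoholomorphic map whose boundary values lie in the $k$-jet prolongation $X^{(k)}$ of $X$, and so that $X^{(k)}$ remains $J^{(k)}$-totally real. Applying the reflection construction fibrewise to $j^{k}u$ produces an extension that is pseudoholomorphic for a reflected structure $\tilde J^{(k)}$. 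Running the base-case bootstrap at this jet level yields $\mathcal{C}^{1,\alpha'}$ bounds on $j^{k}\tilde u$, hence $\mathcal{C}^{k+1,\alpha'}$ bounds on $\tilde u$, which transfer back to the original half-disk as the asserted estimate.

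\textbf{The main obstacle.} The delicate step, and the rationale for the title of the paper, is the choice of $\nabla$ ensuring that $\tilde J^{(k)}$ has regularity $\mathcal{C}^{k,\alpha}$ across $X^{(k)}$. A generic connection produces a $\tilde J^{(k)}$ that is only continuous across $X^{(k)}$, which collapses the induction since no further derivative can be controlled. The minimal connections of the title are a class of connections whose associated jet-bundle structure is compatible with the reflection to maximal order; their existence in the $\mathcal{C}^{k,\alpha}$ category under the totally real hypothesis is the analytical core of the argument. Once this is in place, continuous dependence of the constant $C_{\partial}$ on $J$ follows from the corresponding statement in Theorem 2.2 together with the continuous dependence of $J^{(k)}$ and $\tilde J^{(k)}$ on $J$.
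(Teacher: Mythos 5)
Your outline tracks the paper's strategy in broad strokes: straightening near $X$, the local reflection of \cite{Iv2}, bootstrapping with the operators $P$ and $T$ to $\mathcal{C}^{1,\alpha}$ despite the reflected structure being merely continuous, and then jets with minimal connections for higher derivatives. There are, however, three places where your plan either diverges from what is actually needed or leaves a gap.

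First, you propose to pass in one step to the bundle of $k$-jets $J^{k}(S,M)$ and to run the base-case argument once on the prolongation $j^{k}u$. This presupposes that $j^{k}u$ is already a well-defined section of suitable regularity, i.e.\ that $u$ is already $\mathcal{C}^{k}$ or $L^{k,2}$, which is exactly what has to be proved. The paper avoids this circularity by working inductively with the bundle of $1$-jets of \emph{sections} of the trivialized complex vector bundle $E \simeq D_{+} \times \R^{2n}$ carrying the structure $J_{i}\circ\tilde u$. After Steps 2--5 establish $\tilde u\in\mathcal{C}^{1,\alpha}\cap L^{2,2}$, the derivative section $\tilde u'$ of $E'=\mathrm{Hom}_{\C}(TN\otimes_{j}\C, E\otimes_{J}\C)$ lies in $\mathcal{C}^{\alpha}\cap L^{1,2}$, and the base-case estimate (restated to allow a nontrivial connection) is reapplied to $\tilde u'$. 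Each pass gains exactly one derivative; this iterative structure is what makes the argument close, not a presentational choice.

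Second, the role you assign to minimality is off. Minimality of $\nabla^{E}$, i.e.\ $T=\tfrac14 N$, is what makes the prolongation $J^{1}(u)$ pseudoholomorphic for the lifted structure $\widetilde J^{1}$; it has nothing in itself to do with the reflection. A second, independent condition is required and is missing from your sketch: $\nabla^{E}$ must be \emph{horizontal} with respect to $\nabla$, preserving the splitting $TE=H^{\nabla}E\oplus VE$, so that the connection $\nabla^{1}$ on $J^{1}(E)\to E$ descends to a connection $\nabla'$ on $E'\to N$ and $\tilde u'$ is again a holomorphic section of a bundle over the base. Without horizontality there is no $\nabla'$ and the induction cannot restart. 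Lemma 2.4 delivers a connection that is simultaneously minimal, horizontal, one order less regular than $J$ and $\nabla$, and quantitatively close to the flat connection; all four properties are used in Step 7.

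Third, you do not say how the boundary condition survives prolongation, nor how the constants are controlled. The paper verifies explicitly that under the trivialization of $E'$ one has $J'|_{D_{+}\cap\R}=J_{st}$ and $\tilde u'(D_{+}\cap\R)\subset\R^{n}$, and tracks $\|\nabla'-\nabla'_{0}\|_{\mathcal{C}^{\alpha}}$ in terms of $\|J-J_{st}\|_{\mathcal{C}^{1,\alpha}}$ and $C^{\nabla}$ so that the constant $\widetilde C$ in the restated Step-4 estimate stays uniform at every stage. Without this bookkeeping the final constant $C_{\partial}$ would not depend only on $J$, $X$, $\mu$. Also, the theorem asserts the full $\mathcal{C}^{k+1,\alpha}$ estimate, not merely $\mathcal{C}^{k+1,\alpha'}$; the improvement from $\alpha'$ to $\alpha$ is a separate pass (Steps 3 and 4 in the paper) that your sketch omits.
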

\begin{proof} We divide the proof into several steps.\\ \\ 
$Step$ 1. We introduce suitable coordinates in a neighbourhood of $X$. \\ \\
Fix $\varepsilon>0$. Let $\{U_{i}\}_{i=1,...,n}$ be a cover of $X$ by open subsets of $M$ such that the follwing holds:
\begin{enumerate}
\item For every $i$ there is an open neighbourhood $\overline{U}_{i} \subset U'_{i} \subset M$ and a diffeomorphism $$\Phi_{i}:  \R^{2n} \supset V_{i} \rightarrow  U'_{i}    $$ of class $\mathcal{C}^{k+1,\alpha}$ with $\Phi_{i}(X \cap U_{i}) \subset \R^{n}$ and $(\Phi^{*}_{i}J-J_{st})|_{\R^{n} \cap V_{i}}$ such that the estimates $$\| \Phi^{*}_{i}J -J_{st} \|_{\mathcal{C}^{k,\alpha}(\Phi^{-1}_{i}(U_{i}))}<\varepsilon$$ $$\|\Phi_{i}\|_{\mathcal{C}^{k+1,\alpha}(\Phi^{-1}_{i}(U_{i}),U_{i})}<C'$$ and $$\|\Phi^{-1}_{i}\|_{\mathcal{C}^{k+1,\alpha}(U_{i},\Phi^{-1}_{i}(U_{i}))}<C' $$ hold for every $i$ with some consant $C'=C'(J,X,\varepsilon)$. 
\item If $\tau: \R^{2n} \rightarrow \R^{2n}$ denotes the standard involution $(v,w) \mapsto (v,-w)$ then $\tau(V_{i})=V_{i}$.
\item There is $\delta_{\partial}=\delta_{\partial}(J,X)>0$ such that for any $x \in X$ the inclusion $B_{\delta_{\partial}}(x) \subset U_{i}$ holds for some $i$.  
\end{enumerate}
$Step$ 2. We show that for each $p>2$ the mapping $u$ is of class $\mathcal{C}^{1-\frac{2}{p}}$ up to the boundary and the estimate
$$ \|u \|_{\mathcal{C}^{1-\frac{2}{p}}(B_{\mu r}(z),M)}<C_{0} r^{-1+\frac{2}{p}} \text{diam} (u(B_{r}(z)))$$ holds with some constant $C_{0}=C_{0}(J,X,\mu,p)$. Here $B_{r}(z)$ is the metric ball in $S$ around $z \in \partial S$.\\ \\ 
We choose a biholomorphism $\varphi: \overline{D}_{+} \rightarrow \overline{B}_{r}(z)$, fix $\mu' \in (0,1)$ with $\varphi(\mu'\overline{D}_{+}) \subset  \overline{B}_{\mu r}(z)$ with $\|\varphi \|_{\mathcal{C}^{1}}$ and $\|\varphi^{-1} \|_{\mathcal{C}^{1}}$ bounded by some constant multiple of $r$. Consider the composition $$\tilde{u}:= \Phi^{-1}_{i} \circ u \circ \varphi : \overline{D}_{+} \rightarrow \R^{2n}$$ where $\overline{D}_{+}=\overline{D} \cap \{z \in \C: \text{ Im}(z) \geq 0 \}$. We note that with $J_{i}:=\Phi^{*}_{i}J$ the map $\tilde{u}$ satisfies the equation $$\overline{\partial}_{J_{i} \circ \tilde{u}}\tilde{u}:=\frac{1}{2}(\frac{\partial \tilde{u}}{ \partial x}+(J_{i} \circ \tilde{u})\frac{\partial \tilde{u}}{\partial y})=0$$ in the interior of $\overline{D}_{+}$, furthermore the boundary condition $\tilde{u}(\overline{D}_{+} \cap \R) \subset \R^{n}$ holds. Following \cite{Iv2} we define an extension of $\tilde{u}$ to a map $\tilde{u}^{d}: \overline{D} \rightarrow \R^{2n}$ by setting $$\tilde{u}^{d}(z)=\text{ext}(\tilde{u})(z):=(\tau \circ \tilde{u})(\tau^{d}z)$$ for $z \in \overline{D}_{-}$. Here $\tau^{d}: \C \rightarrow \C$ denotes the canonical involution $z \mapsto \overline{z}$. Let us prove $\tilde{u}^{d} \in L^{1,2}_{loc}(D,\R^{2n})$. To this effect let $\sigma \in \mathcal{C}^{\infty}_{0}(\overline{D},\R)$ be a test function. Using the fact that according to theorem 2.2 we have $\tilde{u}^{d} \in \mathcal{C}^{0}(\overline{D},\R^{2n}) \cap \mathcal{C}^{1}(\overline{D} \setminus \overline{D} \cap \R,\R^{2n})$ and applying Stokes theorem we compute $$ \int_{z' \in D} (\tilde{u}^{d} d \sigma)(z') dz' \wedge d\overline{z}'$$ $$=\int_{z' \in \overline{D}_{+}} (\tilde{u} d \sigma)(z') dz' \wedge d\overline{z}'+\int_{z' \in \overline{D}_{-}} (\tilde{u}^{d} d \sigma)(z') dz' \wedge d\overline{z}'$$ 
$$=\int_{x \in D_{+} \cap \R } (\sigma \tilde{u})(x) dx -\int_{z' \in D_{+} \setminus \R } (\sigma d\tilde{u})(z') dz' \wedge d\overline{z}'$$ $$+\int_{z'' \in D_{-}} (\tau \circ \tilde{u})(\tau^{d}(z'')) (d \sigma)(z'') dz'' \wedge d\overline{z}''$$ $$= \int_{x \in D_{+} \cap \R } (\sigma \tilde{u})(x) dx -\int_{z' \in D_{+} \setminus \R } (\sigma d\tilde{u})(z') dz' \wedge d\overline{z}'$$ $$- \int_{x \in D_{+} \cap \R } (\sigma (\tau \circ \tilde{u}))(x) dx-\int_{z'' \in D_{-} \setminus \R} \sigma(z'') d(\tau \circ \tilde{u} \circ \tau^{d})(z'')  dz'' \wedge d\overline{z}''$$ $$=-\int_{z' \in D_{+} \setminus \R } \sigma(z') d\tilde{u}(z') dz' \wedge d\overline{z}' -\int_{z'' \in D_{-} \setminus \R} \sigma(z'') d(\tau \circ \tilde{u} \circ \tau^{d})(z'')  dz'' \wedge d\overline{z}'' $$ 
Furthermore, explicit computation shows $$\overline{\partial}_{(J_{i} \circ u')^{d}}\tilde{u}^{d}=0$$ with $(J_{i} \circ \tilde{u})^{d}(z):=-(\tau \circ J_{i} \circ \tau) ( \tilde{u} ( \tau^{d}z))$ for $z \in \overline{D}_{-}$. In other words the expression $(J_{i} \circ \tilde{u})^{d}$ defines a continuous complex structure on the trivial bundle $\overline{D} \times \R^{2n}$ and $\tilde{u}^{d}$ is a holomorphic section. \\ \\
Let us choose for $l \geq 1$ a function $\sigma_{l} \in \mathcal{C}^{\infty}_{0}(\overline{D},\R)$ that takes values in $[0,1]$ and satisfies the following conditions: 
$$ \begin{cases} \sigma_{l} \in \mathcal{C}^{\infty}_{0}((\mu'+\frac{1-\mu'}{l})\overline{D},\R) & \\ 
\sigma_{l}= 1 & \text{on }  (\mu'+\frac{1-\mu'}{l+1})\overline{D} \\ 
\|\sigma_{l}\|_{\mathcal{C}^{1,\alpha}(\overline{D},\R)}<D_{l} &  \text{with } D_{l}=D_{l}(k,\alpha,\mu')
   \end{cases} 
   $$ 
Here we have denoted by $(\mu'+\frac{1-\mu'}{l})\overline{D}$ the closed disk of radius $\mu'+\frac{1-\mu'}{l}$ around the origin in $\C$.\\ \\
With $f_{l}:=\sigma_{l}\tilde{u}^{d} \in L^{1,2}_{0}(\overline{D},\R^{2n})$ we have
$$ \overline{\partial}f_{l}+(\overline{\partial}_{(J_{i} \circ \tilde{u})^{d}}-\overline{\partial})f_{l}=\tilde{u}^{d} \overline{\partial}_{(J_{i} \circ \tilde{u})^{d}} \sigma_{l}$$ or \begin{equation} \tag{*} (1+\frac{1}{2}((J_{i} \circ \tilde{u})^{d}-J_{st})J_{st}(T-1))\overline{\partial}f_{l}=\tilde{u}^{d} \overline{\partial}_{(J_{i} \circ \tilde{u})^{d}} \sigma_{l} \end{equation} where we applied part 2 of theorem 2.1. This equation is the principal tool that allows to obtain the desired estimates. \\ \\ Let us use $(*)$ to establish the claim of step 2. Put $l=1$. Note that for $\|(J_{i} \circ \tilde{u})^{d}-J_{st}\|_{L^{\infty}}$ small enough the operator $$1+\frac{1}{2}((J_{i} \circ \tilde{u})^{d}-J_{st})J_{st}(T-1): L^{p}(\overline{D},\R^{2n}) \rightarrow L^{p}(\overline{D},\R^{2n})$$ is invertible. Since $\tilde{u}^{d} \overline{\partial}_{(J_{i} \circ u')^{d}} \sigma_{1} \in L^{p}(\overline{D},\R^{2n})$ we deduce $\overline{\partial}f_{1} \in L^{p}(\overline{D},\R^{2n})$ and with the help of part 3 of theorem 2.1 $$ \| \overline{\partial}f_{1} \|_{L^{p}(\overline{D},\R^{2n})}< (1+\frac{1}{2} \varepsilon(1+\|T\|_{L(L^{p}(\overline{D},\R^{2n}),L^{p}(\overline{D},\R^{2n}))})) \| \tilde{u}^{d} \overline{\partial}_{(J_{i} \circ \tilde{u})^{d}}\|_{L^{p}(\overline{D},\R^{2n})}$$ $$\leq  \frac{3}{2} D_{1} \pi^{\frac{1}{p}} \|\tilde{u}^{d}\|_{L^{\infty}(\overline{D},\R^{2n})}$$ for $$\varepsilon<(1+\|T\|_{L(L^{p}(\overline{D},\R^{2n}),L^{p}(\overline{D},\R^{2n}))})^{-1}$$ Note that so far we have only used the fact that the almost complex structure $(J_{i} \circ u')^{d}$ is essentially bounded. 
\\ \\ Applying part 1 and part 4 of theorem 2.1 we obtain $$\|\tilde{u}\|_{\mathcal{C}^{1-\frac{2}{p}}(\mu'\overline{D}_{+},\R^{2n})} \leq \|\tilde{u}^{d}\|_{\mathcal{C}^{1-\frac{2}{p}}(\mu' \overline{D},\R^{2n})} \leq \|f_{1}\|_{\mathcal{C}^{1-\frac{2}{p}}(\overline {D},\R^{2n})} = \|P(\overline{\partial}f_{1})\|_{\mathcal{C}^{1-\frac{2}{p}}(\overline {D},\R^{2n})}$$ $$ \leq \|P\|_{L(L^{p}(\overline{D},\R^{2n}),\mathcal{C}^{1-\frac{2}{p}}(\overline{D},\R^{2n}))} \| \overline{\partial}f_{1}\|_{\mathcal{C}^{1-\frac{2}{p}}(\overline {D},\R^{2n})} $$ $$ < \frac{3}{2} D_{1} \pi^{\frac{1}{p}} \|P\|_{L(L^{p}(\overline{D},\R^{2n}),\mathcal{C}^{1-\frac{2}{p}}(\overline{D},\R^{2n}))}  \|\tilde{u}^{d}\|_{L^{\infty}(\overline{D},\R^{2n})} $$ $$\leq 3 D_{1} \pi^{\frac{1}{p}} \|P\|_{L(L^{p}(\overline{D},\R^{2n}),\mathcal{C}^{1-\frac{2}{p}}(\overline{D},\R^{2n}))}  \|\tilde{u}\|_{L^{\infty}(\overline{D},\R^{2n})}  =: C'_{0} \|\tilde{u}\|_{L^{\infty}(\overline{D}_{+},\R^{2n})}$$ This completes the proof of step 2. \\ \\
$Step$ 3. The mapping  $u$ is of class $\mathcal{C}^{1,\alpha'}$ up to the boundary for every $\alpha' \in (0,\alpha)$ and we have the estimate
$$ \|u \|_{\mathcal{C}^{1,\alpha'}(B_{\mu r}(z),M)}<C_{1} r^{-1-\alpha'} \text{diam} (u(B_{r}(z)))$$ with $C_{1}=C_{1}(J,X,\mu,\alpha')$. \\
 \\
We employ the fact that since $\tilde{u}$ is $\mathcal{C}^{\beta}$-continuos up to the boundary of $D_{+}$ for each $\beta \in (0,1)$ and because of the choice of $\Phi_{i}$ the complex structure $(J_{i} \circ \tilde{u})^{d}$ is of class $\mathcal{C}^{\alpha'}$ for each $\alpha' \in (0,\alpha)$ and we have the estimate $$\| (J_{i} \circ \tilde{u})^{d}-J_{st}\|_{\mathcal{C}^{\alpha'}(\frac{1+\mu'}{2}\overline{D})}<\varepsilon C'_{0} \|\tilde{u}\|_{L^{\infty}(\overline{D}_{+},\R^{2n})}<\varepsilon C'_{0} C' \delta_{\partial} $$ where the constant $C'_{0}$ is as in the previous step for $1-2/p > \alpha'/\alpha$. Hence arguing as before and using part 5 of theorem 2.1 we obtain for $f_{2}:=\sigma_{2}f$
$$ \| \overline{\partial}f_{2} \|_{\mathcal{C}^{\alpha'}(\overline{D},\R^{2n})}< 
\frac{3}{2} D_{2} C(p) \pi^{\frac{1}{p}} \|\tilde{u}^{d}\|_{L^{\infty}(\overline{D},\R^{2n})}=:C'_{1} \|\tilde{u}^{d}\|_{L^{\infty}(\overline{D},\R^{2n})}$$  Here we have assumed $$\varepsilon<(C'_{0} C' \delta_{\partial} (1+\|T\|_{L(\mathcal{C}^{\alpha'}(\overline{D},\R^{2n}),\mathcal{C}^{\alpha'}(\overline{D},\R^{2n}))}))^{-1}$$ and the constant $C(p)$ bounds the Sobolev embedding $$L^{1,p}(\overline{D},\R^{2n}) \hookrightarrow \mathcal{C}^{1-2/p}(\overline{D},\R^{2n})$$ Applying part 1 and part 5 of theorem 2.1 the assertion follows. \\ \\ 
$Step$ 4. $u$ is of class $\mathcal{C}^{1,\alpha}$ up to the boundary and 
$$ \|u \|_{\mathcal{C}^{1,\alpha}(B_{\mu r}(z),M)}<C_{2} r^{-1-\alpha} \text{diam} (u(B_{r}(z)))$$ holds with $C_{2}=C_{2}(J,X,\mu,\alpha)$. \\ \\
According to the previous step we have the estimate $$\| (J_{i} \circ \tilde{u})^{d}-J_{st}\|_{\mathcal{C}^{\alpha}(\frac{1+2\mu'}{3}\overline{D})}< \varepsilon C'_{1} C' \delta_{\partial}$$ hence we can again apply part 1 and part 5 of theorem 2.1. \\ \\
$Step$ 5. Under the assumption $k > 0$ we have $u \in L^{2,2}_{loc}(S,M)$ and the estimate $$\|u \|_{L^{2,2}(B_{\mu r}(z),M)}<C_{4} r^{-2} \text{diam} (u(B_{r}(z))) $$ with $C_{3}=C_{3}(J,X,\mu)$. \\ \\
For the proof let us again look at the equation $(*)$ from step 2: $$(1+\frac{1}{2}((J_{i} \circ \tilde{u})^{d}-J_{st})J_{st}(T-1))\overline{\partial}f_{l}=\tilde{u}^{d} \overline{\partial}_{(J_{i} \circ \tilde{u})^{d}} \sigma_{l}$$ The right hand side lies in $L^{1,2}(\overline{D},\R^{2n})$ and by the previous step we can estimate its norm in terms of the diameter of the image $\tilde{u}$. The composition $J_{i} \circ \tilde{u}$ lies in $L^{1,2}(\overline{D}_{+})$, hence using Stokes theorem and continuity of $(J_{i} \circ \tilde{u})^{d}$ as in step 2 we obtain $(J_{i} \circ \tilde{u})^{d} \in L^{1,2}(\overline{D})$ and an estimate for $\| (J_{i} \circ \tilde{u})^{d}-J_{st}\|_{L^{1,2}(\overline{D})}$. Applying Schwarz inequality and part 6 of theorem 2.1 we can argue as in the proof of step 2.  \\ \\
$Step$ 6. As preparation for the proof of higher order estimates we discuss the construction of almost complex structures on spaces of jets of pseudoholomorphic maps with the help of minimal connections.\\ \\
Let us recall the general idea. Given two almost complex manifolds $(N,j)$ and $(M,J)$, consider the vector bundle $$\pi^{1}: J^{1}(N,M)= \text{Hom}_{\C}(TN \otimes_{j} \C, TM \otimes_{J} \C) \rightarrow N \times M$$ whose fibre at $z \times x \in N \times M$ is the space of complex linear maps $(T_{z}N,j_{z}) \rightarrow (T_{x}M,J_{x})$. We want to construct an almost complex structure $\widetilde{J}^{1}$ on the total space $J^{1}(N,M)$ such that for any pseudoholomorphic $u: (N,j) \rightarrow (M,J)$ the map $J^{1}(u): N \rightarrow J^{1}(N,M)$ given by the derivative $z \mapsto du_{z} \in J^{1}(N,M)_{(z , u(z))}$ becomes pseudoholomorphic. A natural way of constructing $\widetilde{J}^{1}$ is to use minimal connections on $N$ and on $M$ (\cite{Ga}, paragraph A2). \\ \\
Let us carry out a similar construction for jets of sections of a complex vector bundle $(E,J)$ over an almost complex manifold $(N,j)$. Recall that unless we consider the special case of holomorphic bundles there is a priori no well-defined notion of a holomorphic section of $E$. However, if a connection $\nabla$ on $E$ is chosen then we can lift $J$ to an almost complex structure $\widetilde{J}$ on the total space $E$. To do this we note that $\nabla$ induces a splitting $$T_{\xi}E=  H^{\nabla}_{\xi}E \oplus V_{\xi}E$$ for $\xi \in E$, with $V_{\xi}E=\text{ker}(d\pi_{\xi}) \simeq E_{\pi(\xi)}$ and such that $d\pi_{\xi}: H^{\nabla}_{\xi}E \simeq T_{\pi(\xi)}N$. Given $X \in T_{z}N$ and $\xi \in E_{z}$ we will denote by $X^{H}_{\xi} \in  H^{\nabla}_{\xi}E$ the horizontal lift of $X$ to $T_{\xi}E$.\\ \\ For $V=(\xi,X) \in E_{\pi(\xi)} \oplus  T_{\pi(\xi)}N \simeq T_{\xi}E$ we set $$\widetilde{J}V:=(J\xi,jX) $$  Then a section $u$ of $E$ is called holomorphic if it is $j$-$\widetilde{J}$-pseudoholomorphic when viewed as a map $du: N \rightarrow E$. If $$\pi_{\xi}^{\nabla}: T_{\xi}E \rightarrow V_{\xi}E \simeq E_{\pi(\xi)}$$ denotes the projection to the vertical subspace along $H^{\nabla}_{\xi}E$ then $u$ being holomorphic is equivalent to $$\pi_{u(z)}^{\nabla} \circ du: (T_{z}N,j_{z}) \rightarrow (E_{z},J_{z})$$ being complex linear. \\ \\
Next, define the space of 1-jets of holomorphic sections of $E$ as the bundle $$ J^{1}(E)=\text{Hom}_{\C}(TN \otimes_{j} \C, TE \otimes_{\widetilde{J}} \C) \rightarrow E$$ whose fiber at $\xi \in E$ is the space of complex linear maps from $T_{\pi(\xi)}N$ to $T_{\xi}E$. Denote by $\pi^{1}$ the projection $J^{1}(E) \rightarrow E$. \\ \\ Composition from the left with $\widetilde{J}$ gives a complex structure $J^{1}$ on the bundle $J^{1}(E)$, furthermore given a holomorphic section $u$ of $E$, the differential $du$ defines a section $J^1(u)$ of $J^{1}(E)$ along the graph of $u$ by setting $J^1(u)(X)=du(X)$. We want to interpret $J^1(u)$ as a holomorphic section with respect to a suitable connection on the bundle $J^{1}(E)$.\\ \\
Let us assume that a $j$-linear connection $\nabla^{N}$ on $N$ and a $\widetilde{J}$-linear connection $\nabla^{E}$ on the total space of $E$ are chosen. 
Then 
 define a connection $\nabla^{1}$ on the bundle $J^{1}(E)$ by setting $$(\nabla^{1}_{V}L)(Y)=\nabla^{E}_{V}(LY)-L\nabla^{N}_{\pi_{*}V}Y$$ where $L$ is a local section of $J^{1}(E)$ and $V$ a vector field on the total space $E$.  
As above we can use $\nabla^{1}$ to lift $J^{1}$ to an almost complex structure $\widetilde{J}^{1}$ on the total space $J^{1}(E)$. Then arguing similarly as in Proposition A.2.10 of \cite{Ga} there is a specific choice of $\nabla^{N}$ and of $\nabla^{E}$ for which $J^{1}(u)$, viewed as a map $N \rightarrow J^{1}(E)$, will automatically be $j $-$\widetilde{J}^{1}$-holomorphic. Namely if $$N(V,W)=[\widetilde{J}V,\widetilde{J}W]-\widetilde{J}[\widetilde{J}V,W]-\widetilde{J}[V,\widetilde{J}W]-[V,W]$$ denotes the Nijenhuis tensor of $\widetilde{J}$ and $$T(V,W)=\nabla^{E}_{V}W-\nabla^{E}_{W}V-[V,W]$$ the torsion of $\nabla^{E}$ then $\nabla^{E}$ is called minimal with respect to $\widetilde{J}$ if $T=\frac{1}{4}N$ holds. Connections of this type always exist by \cite{Li}, theorem 5.4.110 on page 190, see also lemma 2.4 below. Let us reassure ourselves that if both $\nabla^{N}$ and $\nabla^{E}$ are minimal then $J^{1}(u)$ will be a pseudoholomorphic map. Given $z \in N$ and $X \in T_{z}N$, the connection $\nabla^{1}$ gives a decomposition of $d J^{1}(u)(X)$ into a horizontal and a vertical part that looks as follows: $$d J^{1}(u)(X)=(\nabla^{1}_{du(X)}du,du(X)) \in (J^{1}(E))_{u(z)} \oplus T_{u(z)}E$$  
and hence $J^{1}(u)$ being holomorphic is equivalent to the condition $$\nabla^{1}_{du(j X)}du=J \circ \nabla^{1}_{du(X)}du$$ or using the definition of $\nabla^{1}$ and complex linearity of $\nabla^{N}$ and of $\nabla^{E}$ to $$ \nabla^{E}_{du(jX)}(du(Y))-\nabla^{E}_{du(X)}(du(jY))=du(\nabla^{N}_{jX}Y-\nabla^{N}_{X}(jY))$$ 
Now $$\nabla^{N}_{jX}Y-\nabla^{N}_{X}(jY)=T^{\nabla,N}(jX,Y)+j\nabla^{N}_{Y}X +[jX,Y]-j\nabla^{N}_{X}(Y)$$ $$=T^{\nabla,N}(X,jY)-jT^{\nabla,N}(X,Y)+[jX,Y]-j[X,Y]$$
Recall that by minimality we have $T^{\nabla,N}=\frac{1}{4}N^{j}$ and the result follows from the fact that the Nijenhuis tensor is preserved under pseudoholomorphic maps, i. e. we have $N(du \cdot, du \cdot)=duN(\cdot,\cdot)$.\\ \\
To complete the construction we need to interpret $J^{1}(u)$ as a holomorphic section of some bundle with base $N$. 
Let us define the complex bundle $E'$ as $$\pi': E'= \text{Hom}_{\C}(TN \otimes_{j} \C,E \otimes_{J} \C) \rightarrow N$$ Note that since for each $\xi \in E$ we have $V_{\xi}E \simeq E_{\pi(\xi)}$ there is an isomorphism $$\text{Hom}_{\C}(TN \otimes_{j} \C, VE \otimes_{\widetilde{J}} \C) \simeq \pi^{*}E'$$ $$=E' \times_{\pi} E=\{(L',\xi) \in E' \times E \text{ s. t. } \pi'(L')=\pi(\xi)\}$$ and hence we obtain a homomorphism of complex vector bundles $J^{1}(\pi): J^{1}(E) \rightarrow E'$ defined by $(J^{1}(\pi))(L)Y=\pi^{\nabla}(LY)$. Under $J^{1}(\pi)$ the section $J^{1}(u)$ projects to a section of $E'$ which we call $u'$. If the connection $\nabla^{1}$ also projects to some connection $\nabla'$ on the bundle $E'$ then $u'$ is holomorphic with respect to $\nabla'$ and the construction is complete: having started with a holomorphic section $u$ of $(E,J,\nabla)$ we arrive at the holomorphic section $u'$ of $(E',J',\nabla')$. However, this will not be the case in general and hence we need to impose an additional condition on our choice of the connection $\nabla^{E}$ on the total space $E$. This condition is as follows: let us call $\nabla^{E}$ horizontal with respect to $\nabla$ if $\nabla^{E}$ preserves the splitting $TE= H^{\nabla}E \oplus VE$ induced by $\nabla$, i. e. given $V \in TE$ and a section $W$ of $H^{\nabla}E$ (resp. of $VE$) then $\nabla^{E}_{V}W \in H^{\nabla}E$ (resp. $\nabla^{E}_{V}W \in VE$). If this holds, then $\nabla^{E}$ induces a connection $\nabla'$ on $E'$: given $L' \in L$ define the horizontal subspace of $T_{L'}E'$ by $$H^{\nabla'}_{L'}E'= d J^{1}(\pi)(H^{\nabla^{1}}_{L} J^{1}(E))$$ where $L \in J^{1}(E)$ with $J^{1}(\pi)(L)=L'$. One checks that if $\nabla^{E}$ is horizontal then the right hand side does not depend on the choice of $L$. \\ \\ 
$Step$ 7. We establish the full statement of the theorem.\\ \\
Let us see how the discussion of step 6 applies in the setting considered in steps 2 to 5. Here we have $N=D_{+}$ and we are given an isomorphism of $E$ to the trivial bundle $D_{+} \times \R^{2n}$ endowed with the complex structure $J_{i} \circ \tilde{u}$ of class $\mathcal{C}^{1,\alpha}$, $k>0$ which we will denote by $J$. Furthermore, we will need to consider the general situation where the connection $\nabla$ on $E$ is of class $\mathcal{C}^{\alpha}$ and differs from the standard connection $\nabla_{0}$ on the trivial bundle $D_{+} \times \R^{2n}$. Let us assume that the difference $A=\nabla-\nabla_{0} \in \Gamma(\text{Hom}(T D_{+} \otimes E ,E)$ satisfies the bound $\|A\|_{\mathcal{C}^{\alpha}}<C^{\nabla}$ with some $C^{\nabla}>0$.\\ \\ 
In this situation the assertion of step 4 can be restated as follows: \\ \\
For each $\mu' \in (0,1)$ there exists a constant $\varepsilon=\varepsilon(\alpha,\mu')$ such that under the assumptions $\| J-J_{st} \|_{\mathcal{C^{\alpha}}(D_{+})}<\varepsilon$ and $J|_{D_{+} \cap \R}=J_{st}$ every section $\tilde{u} \in \mathcal{C}^{0} \cap L^{1,2}_{loc}(D_{+},\R^{2n})$ of $E$ which is holomorphic with respect to $J$ and $\nabla$ and satisfies $u'(D_{+} \cap \R) \subset \R^{m}$ lies in $\mathcal{C}^{1,\alpha}(D_{+})$ and we have the estimate $$\|\tilde{u}\|_{\mathcal{C}^{1,\alpha}(\mu'D_{+},\R^{2m})} < \widetilde{C} \|\tilde{u}\|_{L^{\infty}(D_{+},\R^{2m})}$$ with some constant $\widetilde{C}=\widetilde{C}(\alpha,\mu',C^{\nabla})$. \\ \\ 
Indeed, equation $(*)$ now reads $$ (1+\frac{1}{2}((J_{i} \circ \tilde{u})^{d}-J_{st})J_{st}(T-1))\overline{\partial}f_{l}=(\overline{\partial}_{(J_{i} \circ \tilde{u})^{d}} \sigma_{l}-\sigma_{l} (A^{0,1} \circ \tilde{u})^{d})\tilde{u}^{d}$$ where $A^{0,1}$
is the $(0,1)$-part of $A$ and $(A^{0,1} \circ \tilde{u})^{d}$ denotes the double of $A^{0,1} \circ \tilde{u}$, defined by $(A^{0,1} \circ \tilde{u})^{d}(z)=-(\tau \circ A^{0,1} \circ \tau)(\tilde{u}(\tau^{d}z))$ for $z \in D_{-}$. Note that since $\nabla$ is $J$-linear $(A^{0,1} \circ \tilde{u})^{d}$ is continuos. Hence we can repeat the argumentation of step 4 to establish the claimed estimate.\\ \\
Now let $J'$ be the complex structure on the bundle $E'$ as above. The fibre $E'_{z}$ of $E'$ at $z \in D_{+}$ can be identified with $\text{Hom}_{\C}(\C, \R^{2n} \otimes_{J'_{z}} \C)$ which gives a trivialization $E' \simeq D_{+} \times \R^{2n}$.  We notice that the boundary condition $J'|_{D_{+} \cap \R} = J_{st}$ holds by definition of $J'$. Furthermore, denoting by $\nabla^{E}_{0}$ the euclidean connection on the total space $E$ and using lemma 2.4 below, we can assume that the norm $\|\nabla^{E}-\nabla^{E}_{0}\|_{\mathcal{C}^{\alpha}}$ can be estimated above in terms of $\|J-J_{st}\|_{\mathcal{C}^{1,\alpha}}$ and of $\|\nabla-\nabla_{0}\|_{\mathcal{C}^{1,\alpha}}$. Using the definition of $\nabla'$ we obtain a bound $$\|\nabla'-\nabla'_{0}\|_{\mathcal{C}^{\alpha}}<C^{\nabla'}=C^{\nabla'}(\|J-J_{st}\|_{\mathcal{C}^{1,\alpha}},C^{\nabla})$$ here $\nabla'_{0}$ is the connection on the bundle $E'$ induced by the trivialization $E' \simeq D_{+} \times \R^{2n}$. Note that if we identify $\text{Hom}_{\C}(\C, \C^{n} )$ with $(\R^{2n},J_{st})$ then the totally real subspace $\{L \in \text{Hom}_{\C}(\C, \C^{n} ) \text{ s. t. } L(\R) \subset \R^{n}\}$ is identified with $\R^{n}$ and the boundary condition $\tilde{u}(D_{+} \cap \R) \subset \R^{n}$ implies $\tilde{u}'(D_{+} \cap \R) \subset \R^{n}$. Furthermore since $k>0$, according to step 5 we can assume $\tilde{u} \in L^{2,2}(\overline{D},R^{2n})$ and hence $\tilde{u}' \in L^{1,2}(\overline{D},R^{2m})$. \\ \\
We are now ready to apply the above formulation of the result of step 4 to the section $\tilde{u}'$ of $E'$ to obtain the estimate $$\| \tilde{u}'\|_{\mathcal{C}^{1,\alpha}((\mu')^{2} D_{+},\R^{2m})}<\widetilde{C}\|\tilde{u}'\|_{L^{\infty}(\mu'D_{+},\R^{2m})} \leq \widetilde{C}\|\tilde{u}\|_{\mathcal{C}^{1}(\mu' D_{+},\R^{2n})}$$ Now using $$\| \tilde{u} \|_{\mathcal{C}^{2,\alpha}((\mu')^{2} D_{+},\R^{2n})} \leq \| \tilde{u}'\|_{\mathcal{C}^{1,\alpha}((\mu')^{2} D_{+},\R^{2m})}+\| \tilde{u} \|_{L^{\infty}((\mu')^{2} D_{+},\R^{2n})}$$ and applying step 4 again we arrive at $$\| \tilde{u}\|_{\mathcal{C}^{2,\alpha}((\mu')^{2} D_{+},\R^{2n})} \leq \widetilde{C}\|\widetilde{u}\|_{\mathcal{C}^{1}(\mu' D_{+},\R^{2n})}+\| \tilde{u} \|_{L^{\infty}((\mu')^{2} D_{+},\R^{2n})}$$ $$ \leq (\widetilde{C}^{2}+1) \|\tilde{u}\|_{L^{\infty}(\mu' D_{+},\R^{2n})}$$
with $\tilde{C}=\tilde{C}(\alpha,(\mu')^{2},C^{\nabla'})$. \\ \\ Applying the same construction to $\tilde{u}'$ and arguing inductively gives the statement of the theorem.
\end{proof}
In course of the proof we used the following existence result:
\begin{lemma}
Let $\pi: (E,J) \rightarrow N$ be a complex vector bundle over an almost complex manifold $(N,j)$ and $\nabla$ a $J$-linear connection on $E$. Denote by $\widetilde{J}$ the almost complex structure on the total space $E$ obtained from $j$ and $J$ using $\nabla$. Then there exists a connection $\nabla^{E}$ on the total space $E$ which is minimal with respect to $\widetilde{J}$ and horizontal with respect to $\nabla$.\\ \\
Let $J$ and $\nabla$ be of differentiabilty class $\mathcal{C}^{k,\alpha}$, $k,\alpha>0$, $N$ be the half-disk $D_{+}$ and assume that $E$ is trivial as a real vector bundle, i. e. there is an isomorphism $E \simeq D_{+} \times \R^{2n}$. Denote by $\nabla_{0}$ the standard connection on the        trivial bundle $D_{+} \times \R^{2n} \rightarrow D_{+}$ and by $\nabla^{E}_{0}$ the euclidean connection on the total space $D_{+} \times \R^{2n}$. Then $\nabla^{E}$ can be chosen of class $\mathcal{C}^{k-1,\alpha}$ and such that we have an estimate  $$\|\nabla^{E}-\nabla^{E}_{0}\|_{\mathcal{C}^{k-1,\alpha}}<C(\|\nabla-\nabla_{0}\|_{\mathcal{C}^{k,\alpha}},\|J-J_{st}\|_{\mathcal{C}^{k,\alpha}})$$
\end{lemma}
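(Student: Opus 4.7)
The plan is to construct $\nabla^{E}$ in two stages: a reference connection adapted to the splitting, followed by a tensorial correction enforcing minimality. For the reference, fix any $j$-linear connection $\nabla^{N}$ on $N$ (in the trivial-bundle setting, the flat Euclidean one on $D_{+}$). Using the splitting $TE = H^{\nabla}E \oplus VE$ and the isomorphisms $H^{\nabla}E \cong \pi^{*}TN$, $VE \cong \pi^{*}E$, set $\nabla^{\mathrm{ref}} = \pi^{*}\nabla^{N} \oplus \pi^{*}\nabla$. Since $\widetilde{J}$ acts blockwise as $(jX,J\xi)$ on $V=(X,\xi)$ and both $\nabla^{N}$ and $\nabla$ are complex linear, $\nabla^{\mathrm{ref}}$ is $\widetilde{J}$-linear; it is horizontal with respect to $\nabla$ by construction.

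Next I would deform $\nabla^{\mathrm{ref}}$ to a minimal connection without spoiling either property. Writing $\nabla^{E} = \nabla^{\mathrm{ref}} + A$ with $A$ a $(1,2)$-tensor whose values commute with $\widetilde{J}$, the modified connection remains $\widetilde{J}$-linear, and the minimality condition becomes the purely algebraic equation
\[
A(V,W) - A(W,V) = \tfrac{1}{4}N^{\widetilde{J}}(V,W) - T^{\nabla^{\mathrm{ref}}}(V,W).
\]
Existence of a $\widetilde{J}$-commuting solution $A$ is the content of Theorem 5.4.110 in \cite{Li}, and can be realized by an explicit fibrewise formula polynomial in $\widetilde{J}$, $T^{\nabla^{\mathrm{ref}}}$ and $N^{\widetilde{J}}$. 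To also enforce horizontality, I would restrict $A$ to take values in the subspace of endomorphisms commuting with the projectors $\pi^{H}$, $\pi^{V}$. Since $\widetilde{J}$ itself commutes with these projectors, the tensor equation decouples into a horizontal-horizontal and a vertical-vertical block, each solved by the same algebraic lemma; the mixed blocks of the right-hand side, which arise from $[V^{H},W^{H}]^{V}$ and thus reflect the curvature of $\nabla$, are absorbed by a mixed-type component of $A$ chosen so as to preserve the splitting after skew-symmetrization.

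For the quantitative claim I specialize to $E = D_{+}\times \R^{2n}$ with $\nabla^{N}$ Euclidean. The horizontal projector, and therefore the splitting, is a polynomial expression in $A_{\nabla}:=\nabla-\nabla_{0}\in \mathcal{C}^{k,\alpha}$, so $\nabla^{\mathrm{ref}} - \nabla^{E}_{0}$ is bounded in $\mathcal{C}^{k,\alpha}$ by a polynomial function of $\|A_{\nabla}\|_{\mathcal{C}^{k,\alpha}}$ and $\|J-J_{st}\|_{\mathcal{C}^{k,\alpha}}$. The Nijenhuis tensor $N^{\widetilde{J}}$ and the torsion $T^{\nabla^{\mathrm{ref}}}$ each involve one derivative of coefficients of class $\mathcal{C}^{k,\alpha}$ and hence lie in $\mathcal{C}^{k-1,\alpha}$ with estimates polynomial in the same quantities. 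Since the formula for $A$ is fibrewise-algebraic in these ingredients, one obtains $A \in \mathcal{C}^{k-1,\alpha}$ with a polynomial bound, and $\nabla^{E}=\nabla^{\mathrm{ref}}+A$ satisfies the claimed estimate.

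The principal obstacle I anticipate is the simultaneous imposition of all three conditions ($\widetilde{J}$-linearity, minimality, horizontality) on the corrective tensor $A$: the first two form the classical content of \cite{Li}, whereas horizontality requires a careful analysis of the block decomposition of the torsion equation under the splitting, and in particular of the off-diagonal blocks generated by the curvature of $\nabla$. The key observation rescuing the argument is that $\widetilde{J}$, being block-diagonal with respect to the splitting, commutes with the projectors $\pi^{H}$ and $\pi^{V}$, so the algebraic system from \cite{Li} restricts consistently to each block and admits a block-diagonal solution.
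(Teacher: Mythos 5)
Your reference connection $\nabla^{\mathrm{ref}}=\pi^{*}\nabla^{N}\oplus\pi^{*}\nabla$ is exactly the paper's starting point, and the overall two-stage strategy matches. Where you diverge is in the second stage: the paper does not invoke abstract existence from \cite{Li} and then restrict the corrective tensor to a compatible sub-bundle. Instead it uses the explicit closed-form correction
\[
A(X,Y)=\tfrac{1}{2}\widetilde{J}(\nabla^{\mathrm{ref}}_{X}\widetilde{J})(Y),\qquad
B(X,Y)=\tfrac{1}{4}\bigl(\widetilde{J}(\nabla^{\mathrm{ref}}_{Y}\widetilde{J})(X)+(\nabla^{\mathrm{ref}}_{\widetilde{J}Y}\widetilde{J})(X)\bigr),
\]
taken from \cite{Ok}, and sets $\nabla^{E}=\nabla^{\mathrm{ref}}+A+B$. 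Because $\nabla^{\mathrm{ref}}$ and $\widetilde{J}$ both preserve the splitting $TE=H^{\nabla}E\oplus VE$, the tensor $\nabla^{\mathrm{ref}}\widetilde{J}$ is block-diagonal in its endomorphism slot, so horizontality of $\nabla^{E}$ can be read off the formula directly, and the quantitative estimate follows at once since $A,B$ cost one derivative of $\widetilde{J}$. Your approach replaces this with an abstract solvability argument, which is not wrong in spirit but loses exactly the explicitness that the paper later uses for the $\mathcal{C}^{k-1,\alpha}$ bound.

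The place where your argument has a real gap is the treatment of the mixed blocks. Horizontality constrains the corrective tensor in its \emph{second} slot: an admissible $A$ must satisfy $A(V,W^{H})\in H^{\nabla}E$ for all $V$. In particular $A(V^{H},W^{H})$ is horizontal, and so is its skew-symmetrization. Consequently, for \emph{any} admissible $A$ the vertical component of $T^{\nabla^{\mathrm{ref}}+A}(V^{H},W^{H})$ is fixed and equal to $-[V^{H},W^{H}]^{V}$ — the curvature term of $\nabla$. It therefore cannot be ``absorbed by a mixed-type component of $A$ chosen so as to preserve the splitting after skew-symmetrization''; no such component exists. What actually has to happen is that $-[V^{H},W^{H}]^{V}$ agrees on the nose with the vertical part of $\tfrac14 N^{\widetilde{J}}(V^{H},W^{H})$, an identity between the curvature of the $J$-linear connection $\nabla$ and the Nijenhuis tensor of $\widetilde{J}$ that your block-decoupling argument neither states nor proves. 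You would need to verify that identity explicitly before the abstract existence result from \cite{Li} can be restricted as you propose; in the paper this issue is implicitly taken care of by the specific form of $A$ and $B$ and the cited theorem of \cite{Ok}, which already supplies a connection whose torsion equals $\tfrac14 N^{\widetilde{J}}$.
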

\begin{proof}
Recall that the connection $\nabla$ induces a splitting $TE=H^{\nabla}E \oplus VE$ of the tangent bundle of $E$ and we have $H^{\nabla}E \simeq \pi^{*}TN$ and $VE \simeq \pi^{*}E$. If $\nabla^{N}$ is a connection on $N$ then we can define a horizontal connection $\nabla^{E}$ on the total space $E$ as $\nabla^{E}=\pi^{*}\nabla^{N} \oplus \pi^{*} \nabla$. \\ \\
It is well-known that given any connection on an almost complex manifold it is possible to add some explicitly known linear term to obtain a complex linear minimal connection, see for example \cite{Ok}, chapter 5, theorem 2.4.1. More precisely, if we define $A,B \in \text{Hom}(TE \otimes TE, TE)$ by $$A(X,Y)=\frac{1}{2}\widetilde{J}(\nabla^{E}_{X}\widetilde{J})(Y)$$ and $$B(X,Y)=\frac{1}{4}(\widetilde{J}(\nabla^{E}_{Y}\widetilde{J})(X)+(\nabla^{E}_{\widetilde{J}Y}\widetilde{J})(X))$$ then the sum $$\nabla^{E}(X,Y)+A(X,Y)+B(X,Y)$$ defines a complex linear minimal connection.  Note that by definition of $\tilde{J}$ this connection will also be horizontal with respect to $\nabla$.\\ \\
The second part of the lemma follows directly from the above formulae.
\end{proof}
We can summarize the results of theorem 2.2 and theorem 2.3 as follows:
\begin{corollary}
Under the assumptions of theorem 2.3 there are $\delta_{S}=\delta_{S}(J,\mu)>0$ and $C_{S}=C_{S}(J,\mu)>0$ such that for any nonconstant pseudoholomorphic map $u \in \mathcal{C}^{0} \cap L^{1,2}_{loc} (S,M)$ with $u(\partial S) \subset X$, any $z \in S$ and any $r>0$ such that $ \text{diam}(u(B_{r}(z)))<\delta_{S}$ we have 
$$\|u \|_{\mathcal{C}^{k+1,\alpha}(B_{\mu r}(z),M)}<C_{S} r^{-(k+1+\alpha)} \text{diam} (u(B_{r}(z)))$$
\end{corollary}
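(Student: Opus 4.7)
The plan is to combine Theorem 2.2 and Theorem 2.3 via a short case distinction and a local covering argument. Set $\delta_S := \min(\delta(J), \delta_\partial(J,X))$, and fix $\mu \in (0,1)$, $z \in S$, and $r > 0$ with $\text{diam}(u(B_r(z))) < \delta_S$.

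Two cases are immediate. If $z \in \partial S$, Theorem 2.3 gives the desired bound directly with $C_S = C_\partial(J,X,\mu)$. If $z \in S \setminus \partial S$ with $d(z,\partial S) \geq r$, the ball $B_r(z)$ lies entirely in the hyperbolic surface without boundary $S \setminus \partial S$, so Theorem 2.2 applied to $u|_{S \setminus \partial S}$ yields the estimate with $C_S = C(J,\mu)$.

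The remaining case, $z$ interior with $d(z,\partial S) < r$, I handle by covering the compact set $\overline{B_{\mu r}(z)}$ by local balls on which one of the two theorems applies. Fix an auxiliary radius $\rho := (1-\mu)r/c_0$ with $c_0 \geq 2$. For each $w \in \overline{B_{\mu r}(z)}$, either (i) $d(w,\partial S) \geq \rho/2$, in which case I apply Theorem 2.2 at $w$ with radius $\rho$ and shrinking ratio $3/4$, obtaining a $\mathcal{C}^{k+1,\alpha}$ estimate on $B_{3\rho/4}(w)$; or (ii) $d(w,\partial S) < \rho/2$, in which case I pick a boundary point $w_0 \in \partial S$ nearest to $w$ and apply Theorem 2.3 at $w_0$ with the same radius and ratio, giving an estimate on $B_{3\rho/4}(w_0) \supset B_{\rho/4}(w)$. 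A short computation with $c_0 \geq 2$ verifies the inclusions $B_\rho(w) \subset B_r(z)$ and $B_\rho(w_0) \subset B_r(z)$, so the diameter hypothesis propagates and each local bound is controlled by a constant multiple of $(1-\mu)^{-(k+1+\alpha)} r^{-(k+1+\alpha)} \text{diam}(u(B_r(z)))$. Extracting a finite subcover of $\overline{B_{\mu r}(z)}$ by balls $B_{\rho/4}(w)$ — of cardinality bounded purely in terms of $\mu$ — I patch the local bounds into a global $\mathcal{C}^{k+1,\alpha}$ estimate on $B_{\mu r}(z)$ by the standard Hölder patching: the sup bounds on each derivative pass immediately, while the Hölder seminorm of $\nabla^{k+1}u$ is controlled by the local seminorms for pairs of points in a common cover element and by twice the sup norm divided by a positive lower bound on $|x-y|^\alpha$ for pairs lying in different elements.

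The principal technical obstacle is the geometric bookkeeping of the covering in case (iii): the constants $\rho$, $c_0$, and the shrinking ratio $3/4$ must be chosen compatibly so that every auxiliary ball of radius $\rho$ lies inside $B_r(z)$ (which is what propagates the diameter hypothesis), while the shrunken balls of radius $3\rho/4$ still cover $\overline{B_{\mu r}(z)}$. Once the ball inclusions are verified, the rest is routine, and the final constant $C_S = C_S(J,X,\mu)$ absorbs $\max(C(J,\mu), C_\partial(J,X,\mu))$ together with the cardinality of the subcover and the scaling and patching factors.
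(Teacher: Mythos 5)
Your approach follows the paper's dichotomy between points near and far from $\partial S$, but you add a finite covering argument for the intermediate regime, and this is a genuine completion rather than a detour. The paper's proof, when $d(z,\partial S)\le r$, proposes to apply Theorem 2.3 on $B(z',2r/\mu)$ around the nearest boundary point $z'$; but that ball is strictly larger than $B_r(z)$, so the hypothesis $\text{diam}(u(B_r(z)))<\delta_S$ does not control the diameter of $u(B(z',2r/\mu))$, and the factor $\mu\delta_\partial/2$ in the choice of $\delta_S$ does not compensate. Your instinct that the intermediate case needs its own treatment is correct: if $0<d(z,\partial S)=d<r$, any ball $B_R(z')$ with $R\le r-d$ (the largest radius for which the diameter hypothesis propagates) has $\mu'R\le r-d<\mu r+d$ once $d>(1-\mu)r/2$, for every $\mu'\in(0,1)$, so no $\mu'$-shrinking of it can contain $B_{\mu r}(z)$. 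A covering is thus needed, and yours, with the patching of local H\"older seminorms, does produce the claimed estimate with the correct $r^{-(k+1+\alpha)}$ scaling.

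Two details should be repaired. In case (i) you take $d(w,\partial S)\ge\rho/2$ and apply Theorem 2.2 at $w$ with radius $\rho$; but Theorem 2.2 is stated for a hyperbolic surface without boundary, so $B_\rho(w)$ must lie entirely in the interior, which the threshold $\rho/2$ does not guarantee. Either raise the threshold to $d(w,\partial S)\ge\rho$ and, in case (ii), apply Theorem 2.3 at $w_0$ with radius $2\rho$ (taking $\rho\le(1-\mu)r/3$ so that $B_{2\rho}(w_0)\subset B_r(z)$ still holds), or keep the split at $\rho/2$ and shrink the radius in case (i) to $\rho/2$, then re-check that the shrunken balls still cover $\overline{B_{\mu r}(z)}$. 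Second, since Theorem 2.3 gives $C_\partial=C_\partial(J,X,\mu)$, your constant $C_S$ inherits the $X$-dependence; the statement's $C_S(J,\mu)$ suppresses this. A last cautionary note: the cardinality of your subcover is uniform in $\mu$ only for bounded $r$, since hyperbolic area growth would inflate the count for large $r$; this is harmless for the application in Theorem 1.1, where $r$ is the fixed constant $\rho$ from Lemma 4.1, but is worth flagging if the corollary is to be cited for arbitrary $r$.
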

\begin{proof}
Choose $\delta_{S}< \text{min}\{\delta, \mu \delta_{\partial}/2\}$. For each $z \in S$ we either have $B(z,r) \subset B(z',2 r)$ and hence also $B(z,r/ \mu) \subset B(z',2r / \mu)$ with some $z' \in \partial S$ and theorem 2.3 applies on $B(z', 2r / \mu)$ or alternatively $d(z, \partial S) > r$, therefore $B(z,r) \cap \partial S = \emptyset$ and theorem 2.2 can be used.
\end{proof}
Let us briefly recall two other results that will be relevant: the monotonicity lemma and the theorem about the removal of point singularities.
\begin{lemma}
Let the almost complex structure $J$ be of class $\mathcal{C}^{\alpha}$ for some $\alpha>0$. There exist constants $r_{M}=r_{M}(J)>0$, $\delta_{M}=\delta_{M}(J)>0$ and $C_{M}=C_{M}(J)>0$ with the following properties:
\begin{enumerate}
 \item For any $r<r_{M}$, any compact connected Riemann surface $\Sigma$ with boundary, any pseudoholomorphic map $u: \Sigma \rightarrow M$ and any $x \in u(\Sigma)$ with $u(\partial \Sigma) \cap B(x,r) \subset X$ we have the estimate $$\mathcal{E}(u|_{u^{-1}(B(x,r))}) \geq C_{M} r^{2}$$
\item The inequality $\text{diam}(u(\Sigma)) < 2 \delta_{M}$ implies that $u$ is a constant map.
\end{enumerate}
\end{lemma}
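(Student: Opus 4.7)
My plan is to derive both parts from a standard monotonicity formula, applied to a doubled curve and using the reflection construction from the proof of Theorem 2.3 to handle the boundary condition on $X$.

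\textbf{Step 1 (localization and doubling).} Shrink $r_{M}$ so that for every $x \in u(\Sigma)$ with $d(x, X) < r_{M}$ the ball $B(x, 2 r_{M}) \subset M$ lies in a chart $\Phi_{i}(V_{i})$ from Step 1 of the proof of Theorem 2.3, in which $\|J - J_{st}\|_{\mathcal{C}^{0}} < \varepsilon$ and $X \cap V_{i} \subset \R^{n}$. Set $\Sigma_{r} = u^{-1}(B(x, r))$. If $B(x, r) \cap X = \emptyset$ we use interior monotonicity directly. Otherwise, working in the chart, extend $u|_{\Sigma_{r}}$ across $\R^{n}$ via the involution $\tau(v, w) = (v, -w)$ as in Step 2 of Theorem 2.3, producing a map $u^{d}$ on the doubled domain with image in $B(x, r) \cup \tau(B(x, r))$. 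This map is pseudoholomorphic for the continuous doubled structure $J^{d}$, has no interior boundary on $\R^{n}$, satisfies $\mathcal{E}(u^{d}) = 2 \mathcal{E}(u|_{\Sigma_{r}})$, and still obeys $\|J^{d} - J_{st}\|_{\mathcal{C}^{0}} < \varepsilon$.

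\textbf{Step 2 (monotonicity formula).} For $\varepsilon$ small, the standard symplectic form $\omega_{st}$ on $\R^{2n}$ tames $J^{d}$, giving $\int (u^{d})^{*}\omega_{st} \geq c \, \mathcal{E}(u^{d})$ for some $c>0$. Setting $A(s) = \int_{(u^{d})^{-1}(B(x, s))} (u^{d})^{*}\omega_{st}$ and applying Stokes with a linear primitive $\lambda$ of $\omega_{st}$ satisfying $|\lambda(y)| \leq \tfrac{1}{2}|y - x|$, together with the coarea identity for $A$, yields the classical differential inequality $A'(s) \geq 2 A(s)/s$ for a.e. $s < r_{M}$. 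Hence $s \mapsto s^{-2} A(s)$ is nondecreasing on $(0, r_{M})$. Since $x \in u^{d}(\Sigma \sqcup \tau\Sigma)$ and $u$ is $\mathcal{C}^{1,\alpha}$ by Theorem 2.3, a tangent-cone density argument gives $\lim_{s \to 0^{+}} s^{-2} A(s) \geq \pi$, so $A(r) \geq \pi r^{2}$ and finally $\mathcal{E}(u|_{\Sigma_{r}}) \geq \tfrac{\pi}{2c} r^{2} =: C_{M} r^{2}$, proving part 1.

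\textbf{Step 3 (part 2 and main obstacle).} Set $\delta_{M} = r_{M}/2$ and suppose $\mathrm{diam}(u(\Sigma)) < 2\delta_{M}$ with $u$ nonconstant. Picking any $x \in u(\Sigma)$ gives $u(\Sigma) \subset B(x, 2\delta_{M})$, so $A(s) = \mathcal{E}(u)$ for all $s \in [2\delta_{M}, r_{M})$; this forces $s \mapsto s^{-2} A(s)$ to be strictly decreasing on that interval unless $\mathcal{E}(u)=0$, contradicting Step 2 and yielding $\mathcal{E}(u) = 0$, i.e. $u$ is constant. The main technical point lies in Step 1: one must verify that $u^{d}$ is genuinely in $L^{1,2}_{loc}$ across $\R^{n}$ and satisfies the pseudoholomorphic equation weakly there, which is handled by the same distributional Stokes / test-function computation already carried out in Step 2 of the proof of Theorem 2.3.
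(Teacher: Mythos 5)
The paper's own ``proof'' of this lemma is a one-line citation to Sikorav's Propositions 4.3.1 (interior monotonicity) and 4.7.2 (boundary monotonicity). You instead reprove boundary monotonicity from scratch by reflection, which is a legitimate strategy, but there is a real gap in Step 2: the differential inequality $A'(s) \geq 2A(s)/s$ is asserted ``for a.e.\ $s < r_M$'', and that is false unless $x$ lies on $X$. The inequality comes from Stokes applied to $(u^{d})^{-1}(B(x,s))$, which requires the boundary of the doubled domain to stay outside $B(x,s)$. But the doubled domain has boundary mapping into $\partial B(x,r)\cup\tau(\partial B(x,r)) = \partial B(x,r)\cup\partial B(\tau(x),r)$, and in the chart $|\tau(x)-x| = 2\,d(x,\R^{n})$, so $\partial B(\tau(x),r)$ enters $B(x,s)$ as soon as $s > r - 2\,d(x,X)$. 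Thus the monotonicity inequality is only available on $(0,\,r - 2\,d(x,X))$, and your opening dichotomy (``if $B(x,r)\cap X=\emptyset$ use interior monotonicity, otherwise double'') leaves the entire intermediate regime $0 < d(x,X) < r$ unaddressed: there you can neither double freely up to radius $r$ nor fall back on interior monotonicity. The standard repair is a case split on $d(x,X)$ versus $r$: if $d(x,X) \geq r/4$ apply interior monotonicity in $B(x,r/4)\subset B(x,r)$; if $d(x,X) < r/4$ double and run the inequality only up to $s=r/2$, which still gives $\mathcal{E}(u|_{\Sigma_{r}}) \geq c\,(r/2)^{2}$ and hence the lemma with a worse constant.

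Two smaller slips are worth flagging. In Step 3, with $\delta_{M}=r_{M}/2$ the interval $[2\delta_{M},r_{M})$ on which you claim $A$ is constant is empty; take $\delta_{M}$ strictly below $r_{M}/2$, or just note that once $u(\Sigma)\subset B(x,\rho)$ with $\rho<r_{M}$ the function $A$ is constant on $(\rho,r_{M})$, which is all you use. (A cleaner route to part 2: when $u(\partial\Sigma)\subset X$, the full doubling produces a \emph{closed} surface $\Sigma^{d}$, and Stokes on a closed surface gives $\int_{\Sigma^{d}}(u^{d})^{*}\omega_{st}=0$ directly, which with taming forces $du^{d}\equiv 0$; no monotonicity is needed.) Finally, the claim $\lim_{s\to 0^{+}} s^{-2}A(s)\geq\pi$ via a ``tangent-cone density argument'' is delicate for the merely continuous doubled structure $J^{d}$; it is safer to avoid the density of $u^{d}$ and instead derive the lower bound on $A$ directly from the differential inequality together with the coisoperimetric estimate, or to use the density of $u$ itself on the original half (where Theorem 2.3 gives $\mathcal{C}^{1,\alpha}$ regularity) rather than the density of the reflected curve.
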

Here the energy $\mathcal{E}(u)$ of a pseudoholomorphic map $u: \Sigma \rightarrow M$ is defined as $\mathcal{E}(u)=\|du\|^{2}_{L^{2}(\Sigma,M)}$ and $B(x,r) \subset M$ denotes the metric ball of radius $r$ around $x$.
\begin{proof}
 This follows from \cite{Si}, propositions 4.3.1 and 4.7.2.
\end{proof}
Let us again denote by $D_{+}=D \cap \{z \in \C: \text{ s. t. Im}(z) \geq 0 \}$ the upper half-disk.
\begin{theorem}
Let $J$ be of class $\mathcal{C}^{\alpha}$, $\alpha>0$. 
\begin{enumerate} 
 \item Any pseudoholomorphic map $u: D \setminus {0} \rightarrow M$ with $\mathcal{E}(u) < \infty$ extends continuosly to $D$.
\item Any pseudoholomorphic map $u: D_{+} \setminus {0} \rightarrow M$ with $u(\R \cap D_{+} \setminus {0}) \subset X$ and $\mathcal{E}(u) < \infty$ extends continuosly to $D_{+}$.
\end{enumerate}
\end{theorem}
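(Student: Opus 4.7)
The plan is to show the diameter estimate $\operatorname{diam}(u(B(0,r) \cap \Sigma \setminus \{0\})) \to 0$ as $r \to 0$, where $\Sigma = D$ in case (1) and $\Sigma = D_{+}$ in case (2). Once this is established, compactness of $M$ combined with continuity of $u$ on the punctured domain yields a unique cluster point $x_\infty \in M$ to which $u$ extends continuously; in case (2), closedness of $X$ and the boundary condition $u(\R \cap D_{+} \setminus \{0\}) \subset X$ force $x_\infty \in X$, which matches the required boundary behavior.

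The diameter decay is proved by combining a Courant--Lebesgue pigeonhole with the monotonicity inequality of lemma 2.5 part 1. Since $\mathcal{E}(u) < \infty$, the tails $\widetilde{E}_k := \mathcal{E}(u; B(0,2^{-k}) \cap \Sigma \setminus \{0\})$ tend to zero, and on each dyadic (half-)annulus $A_k = \{2^{-k-1} < |z| < 2^{-k}\} \cap \Sigma$ the energy $E_k := \mathcal{E}(u; A_k)$ likewise tends to zero. A pigeonhole in polar coordinates on $A_k$ followed by Cauchy--Schwarz supplies a radius $\rho_k \in [2^{-k-1}, 2^{-k}]$ on whose associated (half-)circle $\gamma_k = \partial B_{\rho_k} \cap \Sigma$ the oscillation of $u$ is bounded by $\varepsilon_k := C\sqrt{E_k} \to 0$; accordingly $u(\gamma_k) \subset B(x_k, \varepsilon_k)$ for some $x_k \in M$, and in case (2) we may take $x_k \in X$ since the endpoints of $\gamma_k$ lie in $X$ by the boundary condition.

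For fixed $k$ and any $j > k$, apply lemma 2.5 part 1 to the closed (half-)annulus $\Omega_{k,j} = \overline{B_{\rho_k} \cap \Sigma} \setminus B_{\rho_j}$, whose boundary consists of the (half-)circles $\gamma_k, \gamma_j$ and, in case (2), two real segments mapping into $X$. For any $y \in u(\Omega_{k,j})$, setting $\eta_y := d(y, u(\gamma_k \cup \gamma_j))$ makes $B(y, \eta_y)$ disjoint from the curved part of $u(\partial \Omega_{k,j})$, so the hypothesis ``$u(\partial \Omega_{k,j}) \cap B(y,\eta_y) \subset X$'' holds either vacuously (case 1) or because the real portions of the boundary already lie in $X$ (case 2). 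Monotonicity then yields $\eta_y \leq \sqrt{\mathcal{E}(u; \Omega_{k,j})/C_M} \leq \sqrt{\widetilde{E}_k/C_M}$, and therefore
\[
u(\Omega_{k,j}) \subset B\bigl(x_k,\; \varepsilon_k + \sqrt{\widetilde{E}_k/C_M}\bigr) \cup B\bigl(x_j,\; \varepsilon_j + \sqrt{\widetilde{E}_k/C_M}\bigr).
\]
Connectedness of $u(\Omega_{k,j})$ forces the two balls to overlap, so $\operatorname{diam}(u(\Omega_{k,j})) \leq 2\varepsilon_k + 2\varepsilon_j + 4\sqrt{\widetilde{E}_k/C_M}$. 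Letting $j \to \infty$ exhausts $B_{\rho_k} \cap \Sigma \setminus \{0\}$ and sends $\varepsilon_j \to 0$, so $\operatorname{diam}(u(B_{\rho_k} \cap \Sigma \setminus \{0\})) \leq 2\varepsilon_k + 4\sqrt{\widetilde{E}_k/C_M}$, which tends to zero as $k \to \infty$.

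The main technical point is the boundary version of the monotonicity inequality: the totally real condition turns the real segments of $\partial \Omega_{k,j}$ into a ``free'' part of the boundary that automatically satisfies the hypothesis $u(\partial \Omega_{k,j}) \cap B(y,r) \subset X$ of lemma 2.5 part 1 and therefore does not enter the effective boundary from which the interior point $y$ must be separated. Having isolated this observation, both cases of the theorem are handled uniformly by the above two-step scheme; an a priori obstacle, namely that a naive telescoping $|x_k - x_{k+1}| \lesssim \sqrt{E_k}$ is only $\ell^2$-summable in $k$ rather than $\ell^1$-summable, is bypassed by applying monotonicity directly on the enlarged annulus $\Omega_{k,j}$ and then taking $j \to \infty$, which controls the diameter in a single step without summing over annuli.
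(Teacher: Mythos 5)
The paper gives no argument of its own for this statement; it simply cites Sikorav (theorems 4.5.1 and 4.7.3 in \cite{Si}). Your proposal supplies a genuine, self-contained proof via a Courant--Lebesgue pigeonhole combined with the monotonicity lemma, and it correctly identifies the one structural point that unifies the interior and the boundary case: because the real part of $\partial\Omega_{k,j}$ is mapped into $X$, the hypothesis $u(\partial\Omega_{k,j})\cap B(y,r)\subset X$ of lemma 2.6 is satisfied for free, so the only boundary arcs you need to stay away from are the two chosen (half-)circles $\gamma_k,\gamma_j$. Applying monotonicity once on the whole region $\Omega_{k,j}$ rather than telescoping over dyadic shells is exactly the right move to avoid the $\ell^2$-vs-$\ell^1$ issue you mention, and the limit $j\to\infty$ is legitimate since the $\Omega_{k,j}$ are nested. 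Two small points you should make explicit: (i) the monotonicity lemma only applies for $r<r_M$, so to conclude $\eta_y\le\sqrt{\widetilde E_k/C_M}$ you should first observe that once $\widetilde E_k<C_M r_M^2$ the case $\eta_y\ge r_M$ is impossible (apply the lemma with $r$ slightly below $r_M$ to get a contradiction), which holds for all $k$ large; and (ii) the lemma implicitly assumes $u$ nonconstant, but the constant case of the theorem is trivial. With those remarks the argument is complete and matches the standard proof in the literature, whereas the paper itself outsources this step entirely; your write-up is therefore a useful expansion rather than a deviation.
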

\begin{proof}
Part 1 is theorem 4.5.1 and part 2 theorem 4.7.3 in \cite{Si}.
\end{proof}

\section{Geometry of hyperbolic surfaces}
An important source of geometric intuition that leads to the compactness theorem is the geometry of hyperbolic surfaces. Let us state the results that will be useful in course of the proof of theorem 1.1. \\ \\
Recall that a hyperbolic surface is a smooth surface not of exceptional type endowed with a hyperbolic structure, i. e. a complete Riemannian metric of constant sectional curvature $-1$.
\begin{theorem}
Let $\{S_{n}\}_{n \in \N}$ be a sequence of hyperbolic surfaces such that the sequence $\{\mathcal{A}(S_{n})\}$ of areas is bounded by some constant $C < \infty$. Then there exist a subsequence, again denoted by $\{S_{n}\}_{n \in \N}$, a smooth surface $S$, a regular collection $\Delta$ of curves on $S$, a hyperbolic structure $h_{\infty}$ on the complement $S \setminus \Delta$ and diffeomorphisms $\psi_{n}: S \rightarrow S_{n}$, such that convergence $\psi^{*}_{n} h_{n} \rightarrow h_{\infty}$ of hyperbolic structures holds in $\mathcal{C}^{\infty}_{loc}(S \setminus \Delta)$. If we denote by $\Sigma_{n}$ resp. by $\Sigma$ the smooth surface obtained from $S_{n}$ resp. from $S$ by removal of all punctures then $\psi_{n}$ extend to diffeomorphisms $\varphi_{n}: \Sigma \rightarrow \Sigma_{n}$. Furthermore $\psi_{n}$ is an isometry in a neighbourhood of $\Sigma \setminus S$.
\end{theorem}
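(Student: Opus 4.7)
The plan is to deduce this from the Margulis thick-thin decomposition together with Mumford's (or equivalently Cheeger-Gromov's) compactness theorem for hyperbolic surfaces of bounded geometry, followed by an explicit model-based analysis of the degenerating collars.

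First, I would fix a Margulis constant $\varepsilon_{0}>0$ and decompose each $S_{n}$ into its $\varepsilon_{0}$-thick part $(S_{n})_{\geq\varepsilon_{0}}$ and its $\varepsilon_{0}$-thin part $(S_{n})_{<\varepsilon_{0}}$. The Margulis lemma identifies the thin part as a disjoint union of standard model pieces: collar neighbourhoods of simple closed geodesics of length $\ell_{n}^{(i)}<2\varepsilon_{0}$, together with cusp neighbourhoods and, if $S_{n}$ has boundary, funnel neighbourhoods, all of which are explicitly parametrised by $\ell_{n}^{(i)}$. From the area bound $\mathcal{A}(S_{n})\leq C$ and Gauss-Bonnet one obtains a uniform bound on the topological complexity (genus, number of cusps, number of boundary components) and on the number of short geodesics. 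After passing to a subsequence, these combinatorial data stabilise, and one can further assume that each $\ell_{n}^{(i)}$ either converges to a positive limit or tends to $0$.

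Second, I would apply Mumford compactness to the thick part. Since $(S_{n})_{\geq\varepsilon_{0}}$ has injectivity radius bounded below by $\varepsilon_{0}/2$, bounded diameter, and bounded topology, a subsequence converges in the Cheeger-Gromov $\mathcal{C}^{\infty}$ sense: there exists a smooth surface $S^{\mathrm{thick}}$ carrying a hyperbolic metric $h_{\infty}^{\mathrm{thick}}$ and diffeomorphisms $\psi_{n}^{\mathrm{thick}}:S^{\mathrm{thick}}\to (S_{n})_{\geq\varepsilon_{0}/2}$ with $(\psi_{n}^{\mathrm{thick}})^{*}h_{n}\to h_{\infty}^{\mathrm{thick}}$ smoothly on compact subsets. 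For each collapsing geodesic $\gamma_{n}^{(i)}$ (those with $\ell_{n}^{(i)}\to 0$), I would introduce on $S$ a smooth simple closed curve $\gamma^{(i)}\subset\Delta$ separating the two cusp ends that appear in the limit, and for each non-collapsing short geodesic its collar glues back into the thick part to produce the full smooth surface $S$. On $S\setminus\Delta$ the metrics $h_{\infty}^{\mathrm{thick}}$ extend by the standard model cusp metrics to a complete hyperbolic structure $h_{\infty}$.

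Third, I would construct the global diffeomorphisms $\psi_{n}:S\to S_{n}$ by interpolating $\psi_{n}^{\mathrm{thick}}$ with the explicit model identifications on each degenerating collar. In Fermi coordinates $(\rho,\theta)$ transverse to $\gamma_{n}^{(i)}$, the standard collar $\{|\rho|<w(\ell_{n}^{(i)})\}$ is isometric to a piece of the hyperbolic cylinder determined by $\ell_{n}^{(i)}$, and a fixed cusp neighbourhood in $(S\setminus\Delta,h_{\infty})$ is isometric to a horoball quotient. On the overlap annulus $\{\varepsilon_{0}/2<\mathrm{inj}<\varepsilon_{0}\}$, which has bounded geometry, I would use a smooth cut-off to patch $\psi_{n}^{\mathrm{thick}}$ with the explicit isometric identification between a fixed punctured neighbourhood in $S$ and a deep part of the collar in $S_{n}$, obtaining a smooth $\psi_{n}$ that is an honest isometry on a definite neighbourhood of each puncture in $\Sigma\setminus S$ and which still satisfies $\psi_{n}^{*}h_{n}\to h_{\infty}$ in $\mathcal{C}^{\infty}_{\mathrm{loc}}(S\setminus\Delta)$. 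Since each $\gamma^{(i)}$ is collapsed to a point in the compactification $\Sigma$ while the geodesic $\gamma_{n}^{(i)}$ persists as a smooth simple closed curve in $\Sigma_{n}$, the isometric collar identification extends smoothly across the inserted circles, giving the promised extension $\varphi_{n}:\Sigma\to\Sigma_{n}$.

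The main obstacle is the final patching step: the Cheeger-Gromov diffeomorphism on the thick part is only approximately isometric, whereas near $\Sigma\setminus S$ an \emph{exact} isometry is required. This forces a careful interpolation on the bounded-geometry overlap annulus, using the fact that any two hyperbolic metrics with the same injectivity profile on such an annulus differ by a smooth isometry that converges to the identity as $n\to\infty$; absorbing this discrepancy via a $n$-dependent smooth cutoff is where the technical care lies. The other ingredients (existence of Mumford compactness, explicit Margulis collar models, and the Nijenhuis-type preservation of hyperbolic structure on collars) are classical and quotable.
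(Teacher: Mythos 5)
Your proposal takes a genuinely different route from the paper. The paper does \emph{not} use the Margulis thick-thin decomposition together with Mumford/Cheeger--Gromov compactness; instead it cites Hummel's proof (\cite{Hu}, paragraph IV.5), which is built on hyperbolic pair-of-pants decompositions with uniformly bounded boundary lengths, and then reduces the general case to Hummel's setting by passing to Schottky doubles $S_n^d$ and invoking a symmetric pants-decomposition lemma of Ivashkovich--Shevchishin (\cite{Iv}, Lemma 5.10) to produce decompositions of $S_n^d$ invariant under the natural antiholomorphic involution. The pants-decomposition approach has the advantage that the gluing diffeomorphisms are explicitly isometric (Fenchel--Nielsen coordinates give exact model maps on each pair of pants), so the requirement that $\psi_n$ be an honest isometry near $\Sigma\setminus S$ comes essentially for free, whereas in your approach this has to be arranged by hand in the patching step, which you correctly flag as the delicate point.

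There is one concrete gap in your argument, and it is precisely the issue the paper's proof is built to address: the case where the $S_n$ have geodesic \emph{boundary} components whose lengths tend to infinity. A bounded-area sequence can have boundary geodesics with $\ell_n^{(i)}\to\infty$ (e.g.\ a once-punctured pair of pants with boundary lengths $(\ell,\ell,0)$ has area $2\pi$ independently of $\ell$). Such a long boundary geodesic lies in the $\varepsilon_0$-thick part and does not appear as a collapsing collar in your thin-part catalogue of ``collars, cusps, funnels,'' so it is invisible to your Margulis analysis; yet the limit geometry near it is genuinely degenerate. This is exactly why the paper passes to the Schottky double, where the long boundary geodesic becomes a long interior geodesic of a closed bounded-area surface and can be handled by the standard pants machinery. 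You would need to add an analogous doubling (or an explicit treatment of the wide funnel/trousers regime) to close this gap. Separately, the phrase ``Nijenhuis-type preservation of hyperbolic structure on collars'' is not a recognisable classical ingredient here — the Nijenhuis tensor has no role in this purely hyperbolic-geometric statement — so that reference should be removed or replaced with the actual collar lemma you intend to quote.
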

Here, we call a subset $\Delta \subset S$ a regular collection of curves if the connected components of $\Delta$ are boundary components of $S$, simple closed curves on $S \setminus \partial S$ and simple paths with endpoints on $\partial S$ that are transversal to $\partial S$. 
\begin{proof}
A proof of the theorem under the additional assumption that the lenghts of all boundary components of the surfaces $\{S_{n}\}_{n \in \N}$ be bounded above can be found in \cite{Hu}, paragraph IV.5. The proof relies on the existence of decompositions of the surfaces $\{S_{n}\}_{n \in \N}$ into hyperbolic pairs of pants with uniformly bounded lengths of boundary components, see theorem IV.3.7 in \cite{Hu}. The general case of the theorem can be reduced to this situation as follows: Consider the sequence of Shottky doubles $\{S^{d}_{n}\}_{n \in \N}$. Using lemma 5.10 in \cite{Iv} it is possible to obtain suitable pair of pants decompositions of the surfaces $\{S^{d}_{n}\}_{n \in \N}$ which are in addition symmetric with respect to the natural antiholomorphic involutions. We thus obtain the claim arguing as in the proof of theorem 3.7. 
\end{proof}
Let us also recall the thick-thin decomposition.    
\begin{theorem}
Let $S$ be a hypebolic surface without boundary and $0<\rho'<\rho<\text{arsinh} (1)$. Every connected component of $$\{z  \in S: \rho'<r_{inj}(S,z)<\rho\}$$ is isometric to either
\begin{enumerate}
\item $$ U_{\rho',\rho,l}:= \{z\in \H : 2\rho'<d(z,e^{l}z) < 2 \rho \}/(z \sim e^{l}z)$$ for some $l<2\text{arsinh} (1)$ or to
\item $$ V_{\rho',\rho}:= \{ z \in \H: \frac{1}{2 sinh \rho'}> Im(z) > \frac{1}{2 sinh \rho} \}/(z \sim z+1)$$ 
\end{enumerate}
Furthermore $U_{\rho',\rho,l}$ and $V_{\rho}$ are annuli of modulus $$mod(U_{\rho',\rho,l}) \geq \pi(\frac{1}{\sinh \rho'}-\frac{1}{\sinh \rho})$$ and $$mod(V_{\rho',\rho})= \pi(\frac{1}{\sinh \rho'}-\frac{1}{\sinh \rho})$$ 
\end{theorem}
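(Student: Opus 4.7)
The plan is to work on the universal cover. Write $S = \H/\Gamma$ for a torsion-free Fuchsian group $\Gamma$, so that the injectivity radius at $z \in S$ equals $\frac{1}{2}\min\{d(\tilde z, \gamma \tilde z) : \gamma \in \Gamma \setminus \{1\}\}$ for any lift $\tilde z \in \H$ of $z$. The whole statement is then an assertion about explicit fundamental domains for elementary subgroups of $\Gamma$.

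First I would invoke the two-dimensional Margulis lemma: the number $\text{arsinh} (1)$ is a Margulis constant for $\text{PSL}_2(\R)$, so that for any $p \in \H$ the subgroup $\Gamma_p \subset \Gamma$ generated by those $\gamma$ with $d(p,\gamma p) < 2 \text{arsinh} (1)$ is elementary, and hence, since $\Gamma$ is torsion-free and discrete, infinite cyclic. Fix a connected component $C$ of $\{\rho' < r_{inj} < \rho\}$ and a point $z_0 \in C$ with lift $\tilde z_0$; let $\gamma_0$ generate $\Gamma_{\tilde z_0}$. Using path-connectedness of $C$ together with the Margulis lemma applied at each intermediate lift, I would show that the same cyclic subgroup $\langle\gamma_0\rangle$ realises the infimum defining $r_{inj}$ throughout $C$; in particular $C$ is the image in $S$ of $\{p \in \H : 2\rho' < d(p, \gamma_0 p) < 2\rho\}$.

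Next I split into cases on $\gamma_0$. If $\gamma_0$ is hyperbolic with translation length $l$, I conjugate in $\text{PSL}_2(\R)$ so that $\gamma_0(z) = e^l z$; the bound $l \le 2 r_{inj}(S,\cdot) < 2 \text{arsinh} (1)$ at axis points of $C$ gives the required bound on $l$. A direct computation of $d(z,e^l z)$ in the standard coordinates on $\H$ identifies the locus $\{2\rho' < d(p, \gamma_0 p) < 2\rho\}$ with the domain defining $U_{\rho',\rho,l}$, and quotienting by $\gamma_0$ yields case (i). If $\gamma_0$ is parabolic, conjugate to $\gamma_0(z) = z+1$. The formula $d(z, z+1) = 2\,\text{arsinh} (1/(2 \text{Im}(z)))$ converts the displacement inequality into $\frac{1}{2\sinh\rho} < \text{Im}(z) < \frac{1}{2\sinh\rho'}$, yielding case (ii). In both cases the identification automatically respects the hyperbolic isometric structure.

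Finally the moduli are obtained by explicit conformal uniformisation. For $V_{\rho',\rho}$ the substitution $w = e^{2\pi i z}$ sends the strip quotient to a round annulus, from which the stated equality follows. For $U_{\rho',\rho,l}$, a comparison argument provides the lower bound: the equidistant collar around a short closed geodesic of length $l$ conformally contains a cusp-type strip of the same geometric width as in the parabolic case, and monotonicity of the modulus under conformal embedding reduces the estimate to the $V$-computation. The main obstacle in this program is the first step, namely proving that a single cyclic subgroup controls the entire thin component; this is the heart of the Margulis lemma at the threshold $\text{arsinh}(1)$ and rests on ruling out two independent hyperbolic or parabolic elements of $\Gamma$ simultaneously moving a common point by less than $2 \text{arsinh} (1)$, which one verifies either via J\o{}rgensen's inequality or by a direct commutator computation in $\text{PSL}_2(\R)$.
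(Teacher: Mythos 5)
Your plan follows essentially the same route as the reference the paper cites for this theorem (the paper itself gives no argument; it points to Hummel's Proposition IV.4.2 and Examples I.5.5--I.5.6): pass to $\H/\Gamma$, invoke the two-dimensional Margulis lemma at the threshold $\mathrm{arsinh}(1)$ to identify a primitive cyclic $\langle\gamma_0\rangle$ governing each thin component, and split on the type of $\gamma_0$ to obtain the models $U_{\rho',\rho,l}$ and $V_{\rho',\rho}$. The displacement formulas you use are correct, and the $V$-modulus comes out as stated under Hummel's normalisation $\mathrm{mod}(\{1<|w|<R\})=\log R$. Two minor points worth tightening: $C$ is only \emph{a component of} the image of $\{p:2\rho'<d(p,\gamma_0 p)<2\rho\}$ (that set is disconnected in $\H$ when $l<2\rho'$), and you need $\gamma_0$ primitive in $\Gamma$ for the minimal displacement among its powers to be realised by $\gamma_0^{\pm1}$.

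The genuine gap is the lower bound on $\mathrm{mod}(U_{\rho',\rho,l})$. The proposed conformal comparison --- that the collar "conformally contains a cusp-type strip of the same geometric width" --- is not substantiated and cannot be taken literally: the hyperbolic width of $U_{\rho',\rho,l}$ between its boundary equidistants is $\mathrm{arcosh}\tfrac{\sinh\rho}{\sinh(l/2)}-\mathrm{arcosh}\tfrac{\sinh\rho'}{\sinh(l/2)}$, which only tends to the cusp width $\log(\sinh\rho/\sinh\rho')$ as $l\to0$, so no width-preserving isometric inclusion exists for $l>0$. Moreover, the existence of an essential conformal embedding $V_{\rho',\rho}\hookrightarrow U_{\rho',\rho,l}$ is essentially equivalent to the inequality $\mathrm{mod}(V_{\rho',\rho})\le\mathrm{mod}(U_{\rho',\rho,l})$ you are trying to prove, so invoking it without an explicit construction is circular. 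The clean route (and what the cited examples in Hummel amount to) is a direct computation: for $z=re^{i\theta}$ one has $\sinh\bigl(d(z,e^{l}z)/2\bigr)=\sinh(l/2)/\sin\theta$, so $z\mapsto\log z$ identifies the one-sided collar with a Euclidean strip of width $\arcsin\tfrac{\sinh(l/2)}{\sinh\rho'}-\arcsin\tfrac{\sinh(l/2)}{\sinh\rho}$ and period $l$, giving $\mathrm{mod}(U_{\rho',\rho,l})=\tfrac{2\pi}{l}\bigl(\arcsin\tfrac{\sinh(l/2)}{\sinh\rho'}-\arcsin\tfrac{\sinh(l/2)}{\sinh\rho}\bigr)$; the stated bound then follows from $\arcsin'(x)=1/\sqrt{1-x^2}\ge1$ (mean value theorem) together with $\sinh(l/2)\ge l/2$. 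You should replace the comparison argument with this explicit estimate.
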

\begin{proof}
This follows from proposition IV.4.2 with the help of examples I.5.5 and I.5.6 in \cite{Hu}.
\end{proof}
\section{Proof of theorem 1.1}
The idea of proof is to set punctures on the Riemann surfaces $\{\Sigma_{n}\}_{n \in \N}$ to obtain a sequence $\{S_{n}\}_{n \in \N}$ of hyperbolic surfaces to which theorem 3.1 can be applied. The crucial step is to construct the limit map $u_{\infty}$ by showing that these punctures can be chosen so that the gradient of $u_{n}$ is uniformly bounded with respect to the hyperbolic metric on $S_{n}$ and hence the theorem of Arzel\`a-Ascoli applies. This relies on the thick-thin decomposition and can be formulated as follows in the case of curves with boundary:
\begin{lemma}
Let $S$ be a hyperbolic surface with boundary and $u: S \rightarrow M$ a pseudoholomorphic map with $u(\partial \Sigma) \subset X$. For each $\varepsilon>0$ there exists a constant $\rho=\rho(\varepsilon,J,\mathcal{E}(u))>0$ such that for every $z \in S$ the metric ball $B(z,\rho) \subset S$ is contained in some connected subset $A(z) \subset S$ which satisfies one of the following two conditions:
\begin{enumerate}
 \item The complement $\partial A(z) \setminus \partial S$ has two connected components $\partial_{1}A(z)$ and $\partial_{2}A(z)$ which are both smooth curves satisfying $$l(u|_{\partial_{i}A(z)}) < \varepsilon$$
 \item $A(z) \subset S$ is an embedded annulus with boundary components satisfying $\partial_{1}A(z) \subset \partial S$ and $$ l(u|_{\partial_{2}A(z)}) < \varepsilon $$
\end{enumerate}

\end{lemma}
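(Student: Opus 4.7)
The plan is to apply the thick-thin decomposition of Theorem 3.2 to the Schottky double of $S$ and handle the two regimes separately: in the thick part, direct regularity (Corollary 2.5) gives a uniform $\mathcal{C}^1$-bound; in the thin part, a pigeonhole/coarea argument on parallel circles of a long collar produces short curves.

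Setup. Extend $u$ to a pseudoholomorphic map $u^d\colon S^d\to M$ on the Schottky double, using the local reflection construction employed in Step 2 of the proof of Theorem 2.3; the extension satisfies $\mathcal{E}(u^d)=2\mathcal{E}(u)$ and $S^d$ is a closed hyperbolic surface carrying an antiholomorphic involution $\sigma$ whose fixed point set is $\partial S$. Apply Theorem 3.2 to $S^d$ with parameters $\rho'\ll \rho_0<\operatorname{arsinh}(1)$ to be chosen; this yields a decomposition into a $\rho_0$-thick region and finitely many thin annuli $\{W_\alpha\}$. Each $W_\alpha$ either is disjoint from $\partial S$, occurs in a $\sigma$-pair of disjoint copies, or is $\sigma$-invariant with $\partial S\cap W_\alpha$ a core curve or arc.

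Thick case. Suppose $r_{inj}(S^d,z)\ge \rho_0$, so $B(z,\rho_0)\subset S^d$ is an embedded disk. Monotonicity (Lemma 2.6) together with the uniform energy bound $\mathcal{E}(u^d)\le 2C_{\mathcal{E}}$ ensures, after shrinking $\rho_0$, that $\operatorname{diam} u^d(B(z,\rho_0))<\delta_S$. Corollary 2.5 then supplies a $\mathcal{C}^{1}$-bound for $u^d$ on $B(z,\rho_0/2)$ that is independent of $z$. Choose $\rho<\rho_0/4$ small enough that any smooth curve of length $\le 2\rho$ has $u$-image of length $<\varepsilon$, and set $A(z)=B(z,\rho_0/4)\cap S$. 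The boundary components of $A(z)$ not on $\partial S$ are short arcs or circles with $u$-length below $\varepsilon$, yielding case 1 when $z$ lies in the interior and case 2 when $z$ lies near $\partial S$.

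Thin case. If $z\in W_\alpha$, identify $W_\alpha$ conformally with a cylinder $[0,L_\alpha]\times S^1$; the modulus estimate in Theorem 3.2 makes $L_\alpha\to\infty$ as $\rho'\to 0$. Fubini gives $\int_0^{L_\alpha}\!\int_{\{s\}\times S^1}|du|^2\,d\theta\,ds\le 2\mathcal{E}(u)$, and Cauchy--Schwarz on each slice provides $l(u|_{\{s\}\times S^1})^2\le 2\pi\!\int_{\{s\}\times S^1}|du|^2$. Hence the set $G_\varepsilon=\{s:l(u|_{\{s\}\times S^1})<\varepsilon\}$ has measure at least $L_\alpha-4\pi\mathcal{E}(u)/\varepsilon^2$, which is positive and cofinite once $\rho'$ is small. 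Pick $s_-<s_+$ in $G_\varepsilon$ bracketing the horizontal level of $z$ with $\rho$ small enough that $B(z,\rho)$ lies in the strip between them; taking $A(z)$ to be this strip gives case 1 if $W_\alpha$ avoids $\partial S$. If $W_\alpha$ is $\sigma$-invariant, average over $G_\varepsilon\cap \sigma(G_\varepsilon)$ (still cofinite) to pick $s_\pm$ symmetric under $\sigma$; the $\sigma$-quotient of the strip is then an annulus in $S$ with one boundary on $\partial S$ and the other a curve with $u$-length $<\varepsilon$, giving case 2.

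The main obstacle is the $\sigma$-equivariance required in the boundary thin case: one must ensure that the short circles produced by pigeonhole descend to a genuine short arc across $\partial S$ in the quotient, rather than a circle unrelated to the boundary. This is forced by intersecting the good-slice set with its $\sigma$-image and using that $\sigma$ acts by a measure-preserving reflection on the cylinder coordinate $s$.
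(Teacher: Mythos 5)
Your thin-case discussion (coarea/Chebyshev on parallel circles of a long collar, symmetrizing under the antiholomorphic involution) is close in spirit to what the paper does, and the overall two-case split by injectivity radius of the Schottky double also matches. The essential problem is the thick case, which as written does not work and is in fact circular.

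You assert that monotonicity together with the energy bound $\mathcal{E}(u)\le C_{\mathcal{E}}$ forces, after shrinking $\rho_{0}$, that $\operatorname{diam}\,u(B(z,\rho_{0}))<\delta_{S}$, and you then invoke Corollary 2.5 to get a uniform $\mathcal{C}^{1}$ bound. Monotonicity gives a lower bound on the energy of a curve confined to a small ball in the target; it does not convert an upper energy bound into a diameter bound for the image of a fixed-size disk in the domain. Indeed the latter fails precisely when bubbling occurs (rescale a fixed-energy bubble into a smaller and smaller disk), and the content of Lemma 4.1 is exactly to supply the annuli with short boundary curves that make the subsequent monotonicity/diameter argument work. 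In other words, deriving a diameter bound here presupposes the very statement being proved, so Corollary 2.5 is not available at this point. The paper's thick-case argument avoids all of this and is purely conformal: around a nearby point $z'$ with $d(z,z')=2\rho$ it looks at the three nested annuli $A_{1}=B(z',\rho^{1/2})\setminus B(z',3\rho)$, $A_{2}=B(z',3\rho)\setminus B(z',\rho)$, $A_{3}=B(z',\rho)\setminus\{z'\}$, notes that $\operatorname{mod}(A_{3})=\infty$ and $\operatorname{mod}(A_{1})\to\infty$ as $\rho\to 0$ while $B(z,\rho)\subset A_{2}$, and then runs the same pigeonhole calculation on $A_{1}$ and $A_{3}$ to extract the two short curves.

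Two further issues. First, your thick-case candidate $A(z)=B(z,\rho_{0}/4)$ has a single boundary circle when $z$ is in the interior, so it cannot satisfy condition~1 of the lemma, which requires two components of $\partial A(z)\setminus\partial S$; you would at least need an annular $A(z)$, which your construction does not produce. Second, you globally double the \emph{map} to $u^{d}\colon S^{d}\to M$ with $\mathcal{E}(u^{d})=2\mathcal{E}(u)$. The reflection of Step~2 of Theorem~2.3 is a local construction in special charts around $X$ and does not in general patch to a global antiholomorphic reflection of $(M,J)$, so $u^{d}$ need not exist. The paper sidesteps this by taking the embedded annuli in $S^{d}$ (using only the doubled \emph{domain}) and integrating $|du|^{2}$ over the intersection of each annulus with $S$; you should do the same in the thin case as well, where the issue is cosmetic, and redo the thick case along the lines above.
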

\begin{proof}
We first recall that to construct the subset $A(z)$ it is sufficient to find adjacent annuli with large moduli. More precisely we claim that it suffices to show that there exists $\rho=\rho(\varepsilon,J,\mathcal{E}(u))>0$ such that for each $z \in S$ one of the following holds:
\begin{enumerate}
\item There are numbers $r_{1},r_{2},r_{3}>0$ with $$r_{i} \geq \frac{2 \pi}{\mathcal{E}^{2}(u)}$$ for $i=1,3$ and an embedded annulus $S^{1} \times (0,r_{1}+r_{2}+r_{3}) \subset S^{d}$ with $B(z,\rho) \subset S^{1} \times (r_{1}, r_{1}+r_{2})$ \\ \\ Furthermore, for each $t \in (0,r_{1}+r_{2}+r_{3}) $ the intersection $S^{1} \times \{t\} \cap S$ is connected. 
\item There exist $r,r'>0$ with $$r' \geq \frac{2 \pi}{\mathcal{E}^{2}(u)}$$ and an embedded annulus $S^{1} \times (0,r+r') \subset S$ with $B(z,\rho) \subset S^{1} \times (0, r)$
\end{enumerate}
Indeed, in the situation of part one we compute
$$\mathcal{E}(f) \geq \mathcal{E}(f|_{(S^{1}\times(0,r_{1})) \cap S}) =  \int \limits_{t \in [0,r_{1}])} \int \limits_{s \in \text{I}_{t}} \|df(s,t)\|^{2} ds dt$$ $$ \geq \frac{1}{2\pi} \int \limits_{t \in [0,r_{1}]} (\int \limits_{s \in \text{I}_{t}} \|df(s,t)\| ds)^{2} dt \geq \frac{1}{2\pi} \int \limits_{t \in [0,r_{1}]} l^{2}(f|_{\text{I}_{t}\times\{t\}}) dt$$ where we have put $(S^{1}\times(0,r_{1})) \cap S=: \cup_{t \in (0,r_{1})} \text{I}_{t} \times \{t\}$. Hence there exists $t_{1} \in (0,r_{1})$ with $$l^{2}(f|_{\text{I}_{t_{1}}\times\{t_{1}\}}) \leq \frac{2\pi \mathcal{E}(f)}{r_{1}} \leq \varepsilon^{2}$$ By the same argument we find $t_{2} \in (r_{1}+r_{2},r_{1}+r_{2}+r_{3})$ with $$l(f|_{\text{I}_{t_{2}}\times\{t_{2}\}}) \leq \varepsilon$$ Therefore $A(z):=S \cap (S^{1} \times (t_{1},t_{2}))$ satisfies condition 1 of the claim of the theorem. Similarly in the situation of part two we construct $A(z) \subset S$ satisfying the other condition. \\ \\
For the following, let us fix an arbitrary $\rho>0$. Consider two cases:\\ \\
$Case$ 1. The injectivity radius of $S^{d}$ at $z$ satisfies the estimate $r_{inj}(S^{d},z) \geq 4\rho^{1/2}$. \\ \\
Here we can use the argument from the proof of theorem V.2.3 in \cite{Hu}. To this effect we assume $3 \rho < \rho^{1/2}$, choose $z' \in S$ with $d(z,z')=2 \rho$ and consider the following adjacent annuli in $S^{d}$: $$A_{1}=B(z', \rho^{1/2}) \setminus B(z', 3\rho) $$  $$A_{2}=B(z', 3 \rho) \setminus B(z', \rho) $$ and $$A_{3}=B(z',  \rho) \setminus \{z'\}$$ We have $\text{mod}(A_{3})=\infty$, $\text{mod}(A_{1}) \rightarrow \infty$ for $\rho \rightarrow 0$ and $B(z,\rho) \subset A_{2}$. \\ \\
Note that $B(z', \rho^{1/2}) \cap \partial S$ is connected: $B(z', 3 \rho^{1/2})$ is isometric to a ball of radius $2 \rho^{1/2}$ in $\H$ and if we again denote by $\tau^{d}: S^{d} \rightarrow S^{d}$ the natural antiholomorphic involution then $B(z', \rho^{1/2}) \cup \tau^{d}(B(z', \rho^{1/2})) \subset B(z', 3 \rho^{1/2})$ is isometric to a union of two intersecting metric balls of radius $\rho^{1/2}$ in $\H$, hence simply connected.\\ \\ 
$Case$ 2. We have $r_{inj}(S^{d},z) < 4\rho^{1/2}$.\\ \\
Assuming $4\rho^{1/2}<\text{arsinh}(1)$ we can apply theorem 3.2 to deduce that either \\ \\
a) There exists a neighbourhood $U \subset S^{d}$ of $z$ which is isometric to $$\{z' \in \H \text{: } d(z',e^{l}z')<2 \text{arsinh}(1)\}/(z \sim e^{l}z)$$ 
b) A neighbourhood $V \subset S^{d}$ of $z$ is isometric to $$\{z' \in \H \text{: } \text{Im}(z)> \frac{1}{2}\}/(z \sim z+1)$$
Furthermore by theorem 3.2 we can assume that both $U$ and $V$ are invariant with respect to the involution $\tau^{d}$. This immediately implies $V \cap \partial S = \emptyset$ since otherwise $V=(V \cap S) \cup (V \cap \tau^{d}(S))$ would be a union of two pairwise isometric subsets yet the puncture of $V$ can only be contained in one of them. We see that in the case b) we can argue similarly to Case 2 (b) in the proof of lemma V.2.3 in \cite{Hu}. \\ \\
As for case b), we choose $\rho < \rho' < \text{arsinh}(1)$ and put $$A_{2}=\{z' \in U \text{: } r_{inj}(S^{d},z')< \rho' \}$$ The complement $U \setminus A_{2}$ consists of two annuli $A_{1}$ and $A_{3}$ and applying the formulae from theorem 3.2 one can see that for $\rho$ and $\rho'$ small their moduli are sufficiently large. \\\\
Finally, note that if $U \cap \partial S \neq \emptyset$ then either $U \cap \partial S$ is a connected component of $\partial S$ or $U \cap S$ and $U \cap \tau^{d} S$ are two pairwise isometric half-annuli. To see this, we note that for each $\rho'' \in (0,\text{arsinh}(1))$ the subset $$\{z' \in U \text{: } r_{inj}(S^{d},z')= \rho'' \} $$ consists of two copies of $S^{1}$ and is invariant under $\tau^{d}$. Hence either $\{z' \in U \text{: } r_{inj}(S^{d},z')= \rho'' \} \cap \partial S = \emptyset$ for each $\rho'$ or for every $\rho'$ the intersection $\{z' \in U \text{: } r_{inj}(S^{d},z')= \rho'' \} \cap \partial S$ consists of exactly four points. In the first case we arrive at part 1 of the claim of the theorem and otherwise at part 2.
\end{proof}
We can now explain the proof of theorem 1.1. Following the idea from 1.5B in \cite{Gr}, we deduce from lemma 4.1 and the monotonicity statement 2.6: \\ \\ 
There exists a constant $\delta_{0}=\delta_{0}(J)$, such that for each $\delta<\delta_{0}$ and every $n \in \N$ there is a finite subset $F_{n} \subset \Sigma \setminus \partial \Sigma$ with the following properites
\begin{enumerate}
\item We have $2<|F_{n}| \leq N $ for some constant $N$ depending only on $C_{\mathcal{E}}$, $\delta$ and $J$. We denote by $h_{n}$ the hyperbolic structure on $\Sigma \setminus F_{n}$ induced by $j_{n}$ and by $S_{n}$ the hyperbolic surface $(\Sigma \setminus F_{n},h_{n})$.
\item There exists a constant $\rho$ depending only on $C_{\mathcal{E}}$ and on $\delta$, such that for every $z \in S_{n}$ the inclusion $f(B(z,\rho)) \subset B(f(z),\delta)$ holds for the metric ball $B(z,\rho)$ around $z$ in $S_{n}$. 
\end{enumerate}
Now let us choose $\mu \in (0,1)$ and $\delta<\delta_{S}$ where $\delta_{S}$ denotes the constant from corollary 2.5. We obtain a uniform bound on $\|u_{n}\|_{\mathcal{C}^{k+1,\alpha}(S_{n},M)}$. Applying 3.1 we find a surface $S$ and diffeomorphisms $\psi_{n}: S \rightarrow S_{n}$ with $\psi^{*}_{n}h_{n} \rightarrow h_{\infty}$, here $\Delta \subset S$ is a regular collection of curves on $S$ and $h_{\infty}$ a hyperbolic structure on $S \setminus \Delta$. Using the second property of $F_{n}$, corollary 2.5 and applying the theorem of Arzel\`a-Ascoli we obtain uniform $\mathcal{C}^{k+1,\alpha'}$-convergence $u_{n} \circ \psi_{n} \rightarrow u_{\infty}$ on compact subsets of $S \setminus \Delta$, here $\alpha' \in (0,\alpha)$ and $u_{\infty}: S \rightarrow M$ is pseudoholomorphic on $S \setminus \Delta$ with respect to the complex structure $j_{\infty}$ induced by $h_{\infty}$. Note that convergence $\mathcal{E}(u_{n} \circ \psi_{n} ) \rightarrow \mathcal{E}(u_{\infty} )$ follows as in the case of closed curves from the fact that the total area of $S_{n}$ is determined by the homeomorphism type and we have a uniform bound on the gradient of $u_{n}$ with respect to $h_{n}$.  \\ \\
Lifting all punctures of $S$ we obtain $\Sigma$ together with a complex structure $j_{\infty}$ on $\Sigma \setminus \Delta$. If we denote by $F$ the complement $\Sigma \setminus S$ then, after composing with a suitable diffeomorphism supported in a neighbourhood of $F$ we may assume $\varphi_{n}$ to be an isometry near $F$, as a straightforward application of the Riemann mapping theorem shows. Furthermore, the second part of theorem 3.1 implies that $\psi_{n}$ extend to diffeomorphisms $\varphi_{n}: \Sigma \rightarrow \Sigma$. Using theorem 2.7, $u_{\infty}$ extends continuosly to the whole of $\Sigma$ and with the help of the regularity statements 2.2 and 2.3 the extension is pseudoholomorphic. As in the case of closed curves, using the monotonicity lemma 2.6 and convergence of energy one obtains $u_{n} \circ \varphi_{n} \rightarrow u_{\infty}$ pointwise on $\Sigma$. Uniform $\mathcal{C}^{k+1,\alpha'}$-convergence $u_{n} \circ \varphi_{n} \rightarrow u_{\infty}$ on compact subsets of $\Sigma \setminus \Delta=(S \setminus \Delta) \cup F$ will follow from uniform $\mathcal{C}^{0}$-convergence on $\Sigma$ since $\psi_{n}$ is isometric near $F$. Let us discuss uniform $\mathcal{C}^{0}$-convergence in some more detail as it is of independent interest.  \\ \\
 Suppose for a contradiction that there exists some point $z \in \Sigma \setminus S \cup \Delta$ and a sequence $\{z_{n}\}_{n \in \N} \subset \Sigma$ with $z_{n} \rightarrow z$ and $d(u_{n}(z_{n}),u_{\infty}(z_{n}))>\varepsilon>0$. We will asume $z \in \{ \gamma \}$ for some $\gamma \in \Delta$ as the case $z \in \Sigma \setminus S$ can be handled similarly. Fix some $\delta>0$ and a neighbourhood $U \subset S$ of $z$ such that $\text{diam}(u_{\infty}(U))<\varepsilon/2$ and $\mathcal{E}((u_{\infty})|_{U})<\delta$. Because of uniform convergence $u_{n} \circ \varphi_{n} \rightarrow u_{\infty}$ on compact subsets $S \setminus \Delta$ we have $u_{n}(\partial U) \subset B_{\varepsilon/2}(u_{\infty}(z))$ for sufficiently large $n$. Applying theorem 2.6 to a connected component of $U \setminus \{ \gamma \cup \{z_{n}\} \}$ we obtain $\mathcal{E}((u_{n})|_{U}) \geq C_{M} (\varepsilon/4)^{2}$. Now convergence of energies and uniform $\mathcal{C}^{1}$-convergence on compact subsets of $S \setminus \Delta$ implies $\mathcal{E}((u_{n})|_{U}) < 2 \delta$ for large $n$ and hence for $\delta=C_{M} \varepsilon^{2}/32$ we obtain a contradiction. The proof of theorem 1.1 is complete.  \\ \\
\newpage

\vspace{2ex}
\begin{center}
Viktor Fromm, Department of Mathematical Sciences, University of Durham, Durham DH1 3LE, United Kingdom, viktor.fromm@dur.ac.uk
\end{center}
\vspace{2ex}

\end{document}